\documentclass{siamart190516}

\usepackage{graphicx}
\graphicspath{ {Images/} }
\usepackage{booktabs}
\usepackage{graphicx}
\usepackage{amsmath}
\usepackage{amssymb,bm}
\usepackage{mathtools}
\usepackage{geometry}
\usepackage{cite}
\usepackage{algorithm}
\usepackage{algorithmic}
\usepackage{enumitem}   	
\usepackage{subcaption}

\usepackage{bbm}


\crefname{equation}{}{}
\crefname{theorem}{Theorem}{Theorems}
\crefname{lemma}{Lemma}{Lemmas}
\crefname{corollary}{Corollary}{Corollaries}
\crefname{proposition}{Proposition}{Propositions}
\crefname{remark}{Remark}{Remarks}
\crefname{algorithm}{Algorithm}{Algorithms}
\crefname{figure}{Figure}{Figures}
\crefname{table}{Table}{Tables}
\crefname{section}{Section}{Sections}
\crefname{subsection}{Section}{Sections}

\usepackage{todonotes}


\graphicspath{ {./images/} }



\newcommand{\bb}[1]{\mathbb{#1}}

\newcommand{\R}{\mathbb R}

\newcommand{\Var}{\mathrm{Var}}
\newcommand{\Cov}{\mathrm{Cov}}
\newcommand{\Kzero}{K_{0}}

\newtheorem{remark}[theorem]{Remark}

\newcommand{\mat}[1]{\mathbf{#1}}			
\renewcommand{\vec}[1]{\boldsymbol{#1}}		
\newcommand{\kryl}{\mathcal{K}}     
\newcommand{\orthproj}[1]{\mathbbmss{#1}^{\perp}}     
\newcommand{\proj}[1]{\mathbbmss{#1}}     
\newcommand{\sketch}{\Omega}			
\newcommand{\skproj}[1]{\mathbbmss{#1}^{\sketch}}     

\DeclareMathOperator*{\argmin}{argmin}		
\DeclareMathOperator{\diag}{diag}			
\DeclareMathOperator{\vspan}{span}			
\DeclareMathOperator{\range}{range}			
\DeclareMathOperator{\cond}{cond}			
\DeclareMathOperator{\err}{err}				
\DeclareMathOperator{\prob}{\mathbb{P}}		
\DeclareMathOperator{\expec}{\mathbb{E}}	
\DeclareMathOperator{\sign}{sign}			
\DeclarePairedDelimiter{\norm}{\lVert}{\rVert}		
\DeclarePairedDelimiter{\abs}{\lvert}{\rvert}		
\DeclarePairedDelimiter{\scpr}{\langle}{\rangle}		


\usepackage{textcomp}
\usepackage{stmaryrd}

\begin{document}

\title{Randomized biorthogonalization through a two-sided Gram-Schmidt process}

\author{Laura Grigori\thanks{Institute of Mathematics, EPFL, Lausanne, and PSI Center for Scientific Computing, Theory and Data, Villigen PSI, 
Switzerland}\and 
Lorenzo Piccinini\thanks{Dipartimento di Matematica, 
        Alma Mater Studiorum - Universit\`a di Bologna, Piazza di Porta San Donato 5,
        40126 Bologna, Italy. Affiliated to the Italian INdAM-GNCS (Gruppo Nazionale di Calcolo Scientifico).
}\and
Igor Simunec\thanks{Institute of Mathematics, EPFL, Lausanne, Switzerland}}

\maketitle

\begin{abstract}
    We propose and analyze a randomized two-sided Gram-Schmidt process for the biorthogonalization of two given matrices $\mat X, \mat Y \in\bb{R}^{n\times m}$. The algorithm aims to find two matrices $\mat Q, \mat P \in\bb{R}^{n\times m}$ such that ${\rm range}(\mat X) = {\rm range}(\mat Q)$, ${\rm range}(\mat Y) = {\rm range}(\mat P)$ and $(\sketch \mat Q)^T \sketch \mat P = \mat I$, where $\sketch \in\bb{R}^{s \times n}$ is a sketching matrix satisfying an oblivious subspace $\varepsilon$-embedding property; in other words, the biorthogonality condition on the columns of $\mat Q$ and $\mat P$ is replaced by an equivalent condition on their sketches. This randomized approach is computationally less expensive than the classical two-sided Gram-Schmidt process, has better numerical stability, and the condition number of the computed bases $\mat Q, \mat P$ is often smaller than in the deterministic case. Several different implementations of the randomized algorithm are analyzed and compared numerically. 
    The randomized two-sided Gram-Schmidt process is applied to the nonsymmetric Lancozs algorithm for the approximation of eigenvalues and both left and right eigenvectors.
\end{abstract}

\section{Introduction}
\label{sec:intro}

The Gram-Schmidt process is a fundamental tool in numerical linear algebra for the orthonormalization of a set of vectors. It is used in several applications, including in the Arnoldi process for the construction of an orthonormal basis of a Krylov subspace, which is used for a variety of problems, such as the numerical solution of linear systems or the computation of approximate eigenvalues and eigenvectors. The Gram-Schmidt process can be implemented in several mathematically equivalent ways, which have different performance in finite precision arithmetic; for example, the classical Gram-Schmidt algorithm (CGS) suffers from numerical instability, but it can be implemented using matrix-vector operations, and it is thus suitable for parallel architectures; on the other hand, modified Gram-Schmidt (MGS) is more numerically stable, but it mainly uses vector-vector operations, making it more difficult to parallelize; see e.g. \cite{GolubVanLoan13, Bjorck67, BjorkPaige92}.

In recent years, randomized techniques have been used to develop a randomized Gram-Schmidt algorithm, which uses randomized sketching to compute a basis that is sketch-orthogonal instead of orthogonal. This algorithm is cheaper than the standard Gram-Schmidt process and can be implemented in a numerically stable way \cite{BalabanovGrigori22, BalabanovGrigori25}. This approach has demonstrated its effectiveness and has been implemented within Krylov subspace methods for the solution of a range of different numerical linear algebra problems \cite{NakatsukasaTropp24, GuttelSchweitzer23, PalittaSchweitzerSimoncini25,DeDamasGrigori24,dedamas2025randomizedkrylovschureigensolverdeflation}. An unconditionally stable randomized Householder QR algorithm \cite{grigori2024randomizedhouseholderqr} can also be used in this context.

In this work, we concentrate on the two-sided Gram-Schmidt process \cite{Parlett92, FGM93}, which is a variant of the standard Gram-Schmidt process that biorthogonalizes two sets of vectors, instead of orthogonalizing a single set. Specifically, given two matrices $\mat X$, $\mat Y \in \R^{n \times m}$, the two-sided Gram-Schmidt process computes $\mat Q$, $\mat P \in \R^{n \times m}$ so that $\vspan(\mat Q) = \vspan(\mat X)$, $\vspan(\mat P) = \vspan(\mat Y)$ and $\mat P^T \mat Q = \mat I$, that is, $\mat Q$ and $\mat P$ are biorthogonal. Biorthogonal sets of vectors appear, for instance, in the nonsymmetric Lanczos algorithm \cite{Lanczos50}.

The main focus of this paper is a randomized variant of the two-sided Gram-Schmidt process, which computes two sets of vectors that are sketch-biorthogonal, with an approach similar to the randomized Gram-Schmidt process for sketch-orthogonalization \cite{BalabanovGrigori22}. Compared to the standard two-sided Gram-Schmidt process, this randomized variant is computationally less expensive and typically generates better conditioned bases. We provide a theoretical analysis to explain this effect and we support our results with illustrative numerical experiments. 
The algorithm is applied within the nonsymmetric Lanczos process for the computation of eigenvalues and eigenvectors, where it exhibits competitive performance relative to the standard algorithm.

The remainder of the paper is organized as follows. We begin by reviewing in \cref{sec:preliminaries} some basic notions that will be used throughout this work, including orthogonal and oblique projectors, the two-sided Gram-Schmidt process, and randomized subspace embeddings. We present the randomized two-sided Gram-Schmidt process in \cref{sec:randomized-two-sided-gram-schmidt}, starting from sketched oblique projectors and then describing the algorithm and its variants. In \cref{sec:theoretical-analysis} we investigate the behavior of the randomized two-sided Gram-Schmidt process and compare it against the standard two-sided Gram-Schmidt algorithm. In \cref{sec:application-to-nonsymmetric-lanczos} we apply our algorithm to develop a randomized nonsymmetric Lanczos algorithm for eigenvalue approximation, and we discuss its relation with the classical nonsymmetric Lanczos algorithm. The performance of the two-sided Gram-Schmidt process is illustrated in \cref{sec:numerical-experiments} with some numerical experiments, and concluding remarks are given in \cref{sec:conclusions}.

\subsection{Notation}
\label{subsec:notation}

We use bold uppercase letters to denote matrices, and bold lowercase letters to denote vectors.
Given a matrix $\mat A \in \R^{n \times m}$, we will denote its columns by $\vec a_1, \dots, \vec a_m \in \R^n$. We will use the notation $\mat A_i = [\vec a_1, \vec a_2, \dots, \vec a_i]$ to denote the matrix formed by the first $i$ columns of~$\mat A$, and denote the span of the first $i$ columns of $\mat A$ by $\mathcal{A}_i = \range(\mat A_i) \subset \R^n$. We denote the Euclidean inner product between two vectors $\vec x$ and $\vec y$ by $\scpr{\vec x, \vec y} = \vec x^T \vec y$. If $\vec x$ is orthogonal to a subspace $\mathcal{Q} \subset \R^n$, i.e., $\scpr{\vec x, \vec q} = 0$ for all $\vec q \in \mathcal{Q}$, we also write $\vec x \perp \mathcal{Q}$ or $\vec x \in \mathcal{Q}^\perp$. Given a matrix $\sketch \in \R^{s \times n}$, we write $\vec x \perp_\sketch \mathcal{Q}$ if $\sketch \vec x \perp \sketch \mathcal{Q}$, and similarly $\vec x \in \mathcal{Q}^{\perp_\sketch}$. We denote by $\norm{\vec x}$ the Euclidean norm of the vector $\vec x$.     

\section{Preliminaries}
\label{sec:preliminaries}

In this section, we recall basic notions regarding orthogonal and oblique projections, and we establish some related notation. We then describe the two-sided Gram-Schmidt algorithm and recall the main properties of $\varepsilon$-subspace embeddings, which will be the foundation to derive the randomized two-sided Gram-Schmidt algorithm in \cref{sec:randomized-two-sided-gram-schmidt}.
Throughout this section and in the next ones, we will denote by $\mathcal{Q}$ and $\mathcal{P}$ two subspaces of $\R^n$ of dimension $m$, and by $\mat Q$ and $\mat P \in \R^{n \times m}$ two bases associated to $\mathcal{Q}$ and $\mathcal{P}$, respectively.  

\subsection{Orthogonal projectors}
\label{subsec:orthogonal-projectors}

Given a vector $\vec x \in \R^n$, there exists a unique vector $\orthproj{Q} \vec x \in \mathcal{Q}$ such that $\vec x - \orthproj{Q} \vec x \perp \mathcal{Q}$. The vector $\orthproj{Q}\vec x$ is the orthogonal projection of $\vec x$ onto $\mathcal{Q}$, and we call the operator $\orthproj{Q} \in \R^{n \times n}$ the orthogonal projector onto $\mathcal{Q}$. The following proposition summarizes some elementary properties of the orthogonal projector, see e.g. \cite{GolubVanLoan13, BBS02}.
\begin{proposition}
	\label{prop:orthogonal-projector-properties}
Let $\mathcal{Q} \subset \R^n$ be a subspace of dimension $m$ and let $\mat Q \in \R^{n \times m}$ be a basis of~$\mathcal{Q}$. Denoting by $\orthproj{Q}$ the orthogonal projector onto $\mathcal{Q}$, the following properties hold:
	\begin{enumerate}[label=(\roman*)]
		\item 
		For any $\vec x \in \R^n$, there exists a unique $\vec z \in \mathcal{Q}$ such that $\vec x - \vec z \perp \mathcal{Q}$, so the projector $\orthproj{Q}$ is well defined.
		\item
		For any $\vec x$ and $\vec y \in \R^n$, we have $\scpr{\orthproj{Q} \vec x, \vec y} = \scpr{\vec x, \orthproj{Q} \vec y}$.
		\item For any $\vec x \in \R^n$, its orthogonal projection onto $\mathcal{Q}$ satisfies  
		\begin{equation*}
			\orthproj{Q} \vec x = \argmin_{\vec z \in \mathcal{Q}} \norm{\vec x - \vec z}.
		\end{equation*} 
		\item We have $\orthproj{Q} = \mat Q \mat Q^\dagger = \mat Q (\mat Q^T \mat Q)^{-1} \mat Q^T$. In particular, if the basis $\mat Q$ is orthonormal, we have $\orthproj{Q} = \mat Q \mat Q^T$.
	\end{enumerate}
\end{proposition}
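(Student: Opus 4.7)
The plan is to handle the four items in the order (i), (iv), (ii), (iii), since the explicit formula in (iv) falls out naturally from the existence argument in (i) and then makes (ii) and (iii) very short.

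First I would prove existence and uniqueness in (i) constructively, by writing the unknown projection as $\vec z = \mat Q \vec c$ for some coefficient vector $\vec c \in \R^m$ and imposing the orthogonality condition $\mat Q^T(\vec x - \mat Q \vec c) = \vec 0$. Since $\mat Q$ is a basis of $\mathcal{Q}$, it has full column rank, so $\mat Q^T \mat Q$ is symmetric positive definite and in particular invertible; the unique solution $\vec c = (\mat Q^T \mat Q)^{-1} \mat Q^T \vec x$ gives existence and uniqueness in one stroke. This immediately yields the formula $\orthproj{Q} = \mat Q(\mat Q^T \mat Q)^{-1} \mat Q^T$ of item (iv), and the identification with $\mat Q \mat Q^\dagger$ follows from the standard expression for the Moore--Penrose pseudoinverse of a full-column-rank matrix. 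The orthonormal case is then immediate since $\mat Q^T \mat Q = \mat I$.

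For (ii), I would simply plug the formula from (iv) into the inner product: $\scpr{\orthproj{Q} \vec x, \vec y} = \vec y^T \mat Q(\mat Q^T \mat Q)^{-1} \mat Q^T \vec x$, which is manifestly symmetric in $\vec x$ and $\vec y$ because $(\mat Q^T \mat Q)^{-1}$ is symmetric. Equivalently, and perhaps more conceptually, one can decompose $\vec y = \orthproj{Q} \vec y + (\vec y - \orthproj{Q} \vec y)$ and use that the second summand is orthogonal to $\mathcal{Q} \ni \orthproj{Q} \vec x$, reducing both sides of the claimed identity to $\scpr{\orthproj{Q} \vec x, \orthproj{Q} \vec y}$.

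For the minimization property (iii), I would use the Pythagorean identity: for any $\vec z \in \mathcal{Q}$, write $\vec x - \vec z = (\vec x - \orthproj{Q} \vec x) + (\orthproj{Q} \vec x - \vec z)$, where the first term is in $\mathcal{Q}^\perp$ (by definition of the projector) and the second is in $\mathcal{Q}$, hence the two are orthogonal. Then $\norm{\vec x - \vec z}^2 = \norm{\vec x - \orthproj{Q} \vec x}^2 + \norm{\orthproj{Q} \vec x - \vec z}^2$, and the right-hand side is minimized exactly when $\vec z = \orthproj{Q} \vec x$, establishing both the minimizing property and uniqueness of the minimizer.

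There is really no main obstacle here — the proposition is a compendium of classical facts. The only small subtlety is making sure the equivalence $\mat Q (\mat Q^T \mat Q)^{-1} \mat Q^T = \mat Q \mat Q^\dagger$ in (iv) is justified by the full-column-rank hypothesis on $\mat Q$; if one preferred to avoid invoking the pseudoinverse directly, one could simply leave the explicit normal-equations form as the definition and record the pseudoinverse identity as a remark.
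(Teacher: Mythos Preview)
Your proposal is correct and entirely standard. The paper itself does not prove this proposition at all: it merely states the result and cites textbooks (``see e.g.\ \cite{GolubVanLoan13, BBS02}''), so there is no paper proof to compare against; your constructive normal-equations argument for (i) and (iv), followed by the symmetry and Pythagorean arguments for (ii) and (iii), is exactly the kind of proof one would find in those references.
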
 

The orthogonal projection $\orthproj{Q} \vec x$ is the unique element of $\mathcal{Q}$ such that $\vec x = \orthproj{Q} \vec x + \vec y$, with $\vec y \perp \mathcal{Q}$, so it is naturally associated with the decomposition of $\R^n$ as the direct sum $\mathcal{Q} \oplus \mathcal{Q}^\perp$.

\subsection{Oblique projectors}
\label{subsec:oblique-projectors}

Let us assume that the subspaces $\mathcal{Q}$ and $\mathcal{P}$ are such that $\mathcal{Q} \cap \mathcal{P}^\perp = \{ 0 \}$. Then we can define the oblique projector $\proj{Q}$ as the projector onto $\mathcal{Q}$ which projects orthogonally with respect to $\mathcal{P}$, that is, for all $\vec x \in \R^n$ we define $\proj{Q} \vec x \in \mathcal{Q}$ as the unique element such that $\vec x - \proj{Q} \vec x \perp \mathcal{P}$. Similarly, we define $\proj{P}$ as the oblique projector onto $\mathcal{P}$ such that $\vec x - \proj{P} \vec x \perp \mathcal{Q}$ for all $\vec x \in \R^n$.    
We remark that the notation $\proj{Q}$, $\proj{P}$ for oblique projectors is potentially ambiguous, as it omits the subspace against which we impose the orthogonality condition. However, throughout this work we only consider paired subspaces such as $\mathcal{Q}$ and $\mathcal{P}$, so the orthogonal subspace will always be evident from the context.

The following proposition summarizes some basic properties of the oblique projectors. We include its proof for completeness.

\begin{proposition}
	\label{prop:oblique-projector-properties}
	Let $\mathcal{Q}$, $\mathcal{P} \subset \R^n$ be subspaces of dimension $m$ such that $\mathcal{Q} \cap \mathcal{P}^\perp = \{ 0 \}$, and let $\mat Q$ and $\mat P \in \R^{n \times m}$ be bases of $\mathcal{Q}$ and $\mathcal{P}$ respectively. Denoting by $\proj{Q}$ and $\proj{P}$ the oblique projectors onto $\mathcal{Q}$ and $\mathcal{P}$ as defined above, the following properties hold:
	\begin{enumerate}[label=(\roman*)]
		\item 
		For any $\vec x \in \R^n$, there exists a unique $\vec z \in \mathcal{Q}$ such that $\vec x - \vec z \perp \mathcal{P}$, so the projector $\proj{Q}$ is well defined.
		\item
		For any $\vec x$ and $\vec y \in \R^n$, we have $\scpr{\proj{Q} \vec x, \vec y} = \scpr{\vec x, \proj{P} \vec y}$.
		\item For any $\vec x \in \R^n$, if $\mat Q^T \mat P = I$ we have 
		\begin{equation*}
			\proj{Q} \vec x = \argmin_{\vec z \in \mathcal{Q}} \norm{\mat P^T (\vec x - \vec z)}.
		\end{equation*} 
		\item We have $\proj{Q} = \mat Q (\mat P^T \mat Q)^{-1} \mat P^T$. In particular, if $\mat Q^T \mat P = I$ we have $\proj{Q} = \mat Q \mat P^T$.
	\end{enumerate}
\end{proposition}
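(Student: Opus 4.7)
The plan is to establish the items in the order (i) $\to$ (iv) $\to$ (ii) $\to$ (iii), since once (i) gives us the crucial invertibility of $\mat P^T \mat Q$, the closed-form expression (iv) drops out immediately and then (ii) and (iii) become short consequences.

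For (i), I would fix $\vec x \in \R^n$ and parametrize any candidate $\vec z \in \mathcal Q$ as $\vec z = \mat Q \vec c$ with $\vec c \in \R^m$. The condition $\vec x - \vec z \perp \mathcal P$ is equivalent to $\mat P^T \vec x = \mat P^T \mat Q \vec c$, so the statement reduces to showing that $\mat P^T \mat Q \in \R^{m \times m}$ is invertible. If $\mat P^T \mat Q \vec c = \vec 0$ then $\mat Q \vec c \in \mathcal P^\perp$; combined with $\mat Q \vec c \in \mathcal Q$ and the hypothesis $\mathcal Q \cap \mathcal P^\perp = \{ \vec 0 \}$, this forces $\mat Q \vec c = \vec 0$, and since $\mat Q$ is a basis of $\mathcal Q$ we obtain $\vec c = \vec 0$. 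Existence and uniqueness of $\vec z$ follow, and the resulting operator is basis-independent since the characterization is purely in terms of $\mathcal Q$ and $\mathcal P$.

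Item (iv) then follows directly from the argument above by writing $\vec c = (\mat P^T \mat Q)^{-1} \mat P^T \vec x$, giving $\proj{Q} = \mat Q (\mat P^T \mat Q)^{-1} \mat P^T$, and the specialization $\proj{Q} = \mat Q \mat P^T$ under $\mat Q^T \mat P = \mat I$ is immediate. For (ii), I would avoid computing with the explicit formula and instead use the defining orthogonality relations directly: decomposing $\vec y = \proj{P} \vec y + (\vec y - \proj{P} \vec y)$, the second summand lies in $\mathcal Q^\perp$ while $\proj{Q} \vec x \in \mathcal Q$, so $\scpr{\proj{Q} \vec x, \vec y} = \scpr{\proj{Q} \vec x, \proj{P} \vec y}$; the symmetric decomposition of $\vec x$ gives $\scpr{\vec x, \proj{P} \vec y} = \scpr{\proj{Q} \vec x, \proj{P} \vec y}$, and the two identities combine.

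For (iii), under the normalization $\mat Q^T \mat P = \mat I$ (equivalently $\mat P^T \mat Q = \mat I$), any $\vec z = \mat Q \vec c \in \mathcal Q$ yields $\mat P^T(\vec x - \vec z) = \mat P^T \vec x - \vec c$, so the minimum of $\norm{\mat P^T(\vec x - \vec z)}$ over $\vec z \in \mathcal Q$ equals $0$ and is attained uniquely at $\vec c = \mat P^T \vec x$, giving $\vec z = \mat Q \mat P^T \vec x = \proj{Q} \vec x$ by (iv). There is no real obstacle here; the only substantive step is the invertibility argument in (i), and the rest is bookkeeping built on (i) and (iv).
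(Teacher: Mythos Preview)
Your proof is correct. The overall content matches the paper's argument, but you reorganize it in a slightly cleaner way: the paper proves the items in the order (i)--(ii)--(iii)--(iv), establishing (i) via the abstract direct-sum decomposition $\R^n = \mathcal Q \oplus \mathcal P^\perp$ and then verifying (iv) independently by checking that $\mat Q(\mat P^T\mat Q)^{-1}\mat P^T$ is idempotent with the right range and kernel. Your route instead parametrizes $\vec z = \mat Q\vec c$ from the start, so that (i) becomes the invertibility of $\mat P^T\mat Q$ and (iv) falls out as an immediate byproduct rather than requiring a separate verification; items (ii) and (iii) are then handled essentially as in the paper (your (ii) is identical, and your (iii) is a minor variant using the parametrization instead of the decomposition $\vec x = \proj{Q}\vec x + \vec v$). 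The gain from your ordering is economy---no need to re-derive the formula in (iv)---while the paper's ordering keeps the subspace viewpoint and the matrix formula more decoupled.
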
 
\begin{proof}
    For $(i)$, we have $\mathcal{Q} \cap \mathcal{P}^\perp = \{ 0 \}$ and therefore $\bb{R}^n = \mathcal{Q} \oplus \mathcal{P}^\perp$ holds. Hence, any vector $\vec x\in\bb{R}^{n}$ can be written in a unique way as $\vec x = \vec z + \vec v$, where $\vec z \in\mathcal{Q}$ and $\vec v \in \mathcal{P}^\perp$. This shows that we can uniquely define $\proj{Q} \vec x = \vec z$, and that the projector $\proj{Q}$ is well defined.

    For $(ii)$, using $(i)$ we can write $\vec y = \proj{P} \vec y + \vec u$ and $\vec x = \proj{Q} \vec x + \vec v$, where $\vec u\in \mathcal{Q}^\perp$ and $\vec v\in\mathcal{P}^\perp$. Thus, we have
    \begin{align*}
        \scpr{\proj{Q}\vec x, \vec y} &= \scpr{\proj{Q} \vec x, \proj{P} \vec y} + \scpr{\proj{Q}\vec x , \vec u} = \scpr{\proj{Q}\vec x, \proj{P}\vec y},\\
        \scpr{\vec x, \proj{P}\vec y} &= \scpr{\proj{Q}\vec x, \proj{P}\vec y} + \scpr{\vec v, \proj{P} \vec y} \;= \scpr{\proj{Q}\vec x, \proj{P} \vec y},
    \end{align*}
    which proves the equality.

    For $(iii)$, we write $\vec x = \proj{Q} \vec x + \vec v$, with $\vec v\in\mathcal{P}^\perp$. It follows that for any $\vec z \in \mathcal{Q}$, we have 
    \begin{equation*}
        \| \mat P^T(\vec x - \vec z) \| = \| \mat P^T ( \proj{Q} \vec x + \vec v - \vec z) \| = \| \mat P^T \proj{Q} \vec x - \mat P^T \vec z\|,
    \end{equation*}
    that is minimized by taking $\vec z = \proj{Q} \vec x$.

    To prove $(iv)$, we first show that $\mat Q (\mat P^T \mat Q)^{-1} \mat P^T$ is a projector onto $\mathcal{Q}$. Indeed, we have $\range (\mat Q (\mat P^T \mat Q)^{-1} \mat P^T) = \mathcal{Q}$ and  
    \begin{equation*}
        (\mat Q (\mat P^T \mat Q)^{-1} \mat P^T)^2 = \mat Q (\mat P^T \mat Q)^{-1} \mat P^T \mat Q (\mat P^T \mat Q)^{-1} \mat P^T = \mat Q (\mat P^T \mat Q)^{-1} \mat P^T.
    \end{equation*}
    To conclude that $\mat Q (\mat P^T \mat Q)^{-1} \mat P^T = \proj{Q}$, we just have to show that the condition in $(i)$ holds. Given a vector $\mat P \vec z \in \mathcal{P}$ and a vector $\vec x \in \R^n$, we have
    \begin{equation*}
        (\mat P \vec z)^T (\vec x - \mat Q (\mat P^T \mat Q)^{-1} \mat P^T \vec x) = \vec z^T \mat P^T \vec x - \vec z^T \mat P^T \mat Q (\mat P^T \mat Q)^{-1} \mat P^T \vec x = 0.
    \end{equation*}
	Then $\mat Q (\mat P^T \mat Q)^{-1} \mat P^T = \proj{Q}$ because of the uniqueness of the projector which satisfies $(i)$. If $\mat Q^T \mat P=I$, it follows immediately that $\proj{Q} = \mat Q \mat P^T$. This concludes the proof.
\end{proof}

Similarly to the case of the orthogonal projector $\orthproj{Q}$, the oblique projection $\proj{Q} \vec x$ is the unique element of~$\mathcal{Q}$ such that $\vec x = \proj{Q} \vec x + \vec v$, with $\vec v \perp \mathcal{P}$, so it is naturally associated with the decomposition of $\R^n$ as the direct sum $\mathcal{Q} \oplus \mathcal{P}^\perp$.

\subsection{Two-sided Gram-Schmidt algorithm}
\label{subsec:two-sided-gram-schmidt}

In this section we review the two-sided Gram-Schmidt algorithm for biorthogonalization. Let $\mat X$ and $\mat Y \in \R^{n \times m}$ be two given matrices with $m < n$, and assume that they have full rank. Our goal is to construct two biorthogonal matrices $\mat Q$ and $\mat P \in \R^{n \times m}$, such that $\mat Q^T \mat P = I$ and $\range(\mat Q) = \range(\mat X)$, $\range(\mat P) = \range(\mat Y)$.
The algorithm is quite similar to the standard Gram-Schmidt process for orthogonalization, with the main difference being that oblique projectors are used in place of orthogonal projectors.

We start by introducing some notation that will be used throughout the paper. Let $\mathcal{Q} = \range(\mat X)$ and $\mathcal{P} = \range(\mat Y)$, and respectively denote by $\proj{Q}$ and $\proj{P}$ the oblique projectors onto $\mathcal{Q}$ and $\mathcal{P}$ that project orthogonally with respect to the other subspace. Similarly, for each $i = 1, \dots, m$, we define the subspaces spanned by the first $i$ columns of $\mat X$ and $\mat Y$ as $\mathcal{Q}_i = \range(\mat X_i)$ and $\mathcal{P}_i = \range(\mat Y_i)$, and we respectively denote by $\proj{Q}_i$ and $\proj{P}_i$ the oblique projectors onto $\mathcal{Q}_i$ and $\mathcal{P}_i$ acting orthogonally with respect to the other subspace.

In the $i$-th iteration of the two-sided Gram-Schmidt algorithm, the $i$-th column $\vec q_i$ of $\mat Q$ is computed by subtracting from $\vec x_i$ its oblique projection $\proj{Q}_{i-1} \vec x_i$, which ensures that $\vec q_i$ is orthogonal to $\mathcal{P}_{i-1}$. The $i$-th column $\vec p_i$ of $\mat P$ is computed in an analogous way, and the vectors $\vec q_i$ and $\vec p_i$ are then normalized by imposing $\scpr{\vec q_i, \vec p_i} = 1$. The algorithm is outlined in \cref{algorithm:two-sided-gram-schmidt}. In practice, the oblique projector $\proj{Q}_{i-1}$ can be implemented in several different mathematically equivalent ways, and this choice can have a significant impact on the numerical behavior of the algorithm. We present the most common implementations for the projector in the following sections.

\begin{remark}
	\label{rem:two-sided-gram-schmidt-breakdown}
	If $\scpr{\vec q_i, \vec p_i} = 0$ at line $4$ of \cref{algorithm:two-sided-gram-schmidt}, the algorithm breaks down and it is not possible to continue the biorthogonalization process. In the following, we are going to assume that this breakdown never occurs. However, even small nonzero values of $\scpr{\vec q_i, \vec p_i}$ can be problematic, because after normalization the $i$-th columns of $\mat Q$ and $\mat P$ would have very large norms; this is one of the main reasons for the quick growth in the condition number of the bases $\mat Q$ and $\mat P$.
\end{remark}

\begin{remark}
	\label{rem:two-sided-gram-schmidt-normalizations}
	In our implementation of \cref{algorithm:two-sided-gram-schmidt}, we normalize the vectors $\vec q_i$ and $\vec p_i$ in order to have $\scpr{\vec q_i, \vec p_i} = 1$ and $\norm{\vec q_i} = \norm{\vec p_i}$. Other choices for normalization are possible, for example one could normalize $\norm{\vec q_i} = \norm{\vec p_i} = 1$, and obtain matrices $\mat Q$ and $\mat P$ which satisfy the biorthogonality condition $\mat Q^T \mat P = \diag(d_1, \dots, d_m)$, with $d_i = \scpr{\vec q_i, \vec p_i}$. Since we did not notice any significant differences in the finite precision behavior of the algorithm when employing different normalization strategies, we opted to use the normalization illustrated in \cref{algorithm:two-sided-gram-schmidt} in order to simplify the presentation.
\end{remark}

\begin{algorithm}[t]
	\caption{Two-sided Gram-Schmidt process
	\label{algorithm:two-sided-gram-schmidt}}
	\begin{algorithmic}[1]
	\REQUIRE $\mat X$, $\mat Y \in \R^{n\times m}$.
	\ENSURE $\mat Q$, $\mat P \in \R^{n\times m}$ such that $\range(\mat Q) = \range(\mat X)$, $\range(\mat P) = \range(\mat Y)$ and $\mat Q^T \mat P = I$.   
	\FOR {$i=1, \ldots, m$}
	\STATE $\vec q_i = \vec x_i - \proj{Q}_{i-1} \vec x_i$   
	\STATE $\vec p_i = \vec y_i - \proj{P}_{i-1} \vec y_i$   
	\STATE $d_i = \scpr{\vec q_i, \vec p_i}$ 
	\IF{$d_i = 0$}
		\RETURN	\COMMENT{breakdown}
	\ENDIF 
	\STATE $\vec q_i = \vec q_i / \sqrt{d_i}$
	\STATE $\vec p_i = \vec p_i / \sqrt{d_i} \cdot \sign(d_i)$ 
	\ENDFOR
	\STATE $\mat Q = [\vec q_1, \dots, \vec q_m]$ 
	\STATE $\mat P = [\vec p_1, \dots, \vec p_m]$ 
	\end{algorithmic}
\end{algorithm}
	
\subsubsection{Classical Gram-Schmidt}
\label{subsubsec:classical-gram-schmidt}

Observe that \cref{algorithm:two-sided-gram-schmidt} constructs biorthogonal matrices $\mat Q$ and $\mat P$ such that $\mat Q_i^T \mat P_i = I \in \R^{i \times i}$ for all $i = 1, \dots, m$. Therefore, using \cref{prop:oblique-projector-properties}$(iv)$, the oblique projector onto $\mathcal{Q}_i$ can be computes as $\proj{Q}_i = \mat Q_i \mat P_i^T = \sum_{j = 1}^i \vec q_j \vec p_j^T$, and similarly for $\proj{P}_i$. 
With this approach, lines 2 and 3 of \cref{algorithm:two-sided-gram-schmidt} become
\begin{equation*}
	\begin{aligned}
		\vec q_i &= \vec x_i - \mat Q_{i-1} \mat P_{i-1}^T \vec x_i, \\
		\vec p_i &= \vec y_i - \mat P_{i-1} \mat Q_{i-1}^T \vec y_i.
	\end{aligned}
\end{equation*}
The resulting algorithm is known as the classical Gram-Schmidt process (CGS), and its main advantage is the fact that all inner products $\mat P_{i-1}^T \vec x_i$ and $\mat Q_{i-1}^T \vec y_i$ can be computed simultaneously, making it suitable for parallel architectures. However, this procedure can suffer from numerical instabilities, and it may be necessary to repeat the orthogonalization to ensure that biorthogonality holds in finite precision arithmetic.

\subsubsection{Modified Gram-Schmidt}
\label{subsubsec:modified-gram-schmidt}

The modified Gram-Schmidt process (MGS) is an alternative to CGS which is more numerically stable, but it involves computing the inner products with the columns of $\mat Q_i$ and $\mat P_i$ sequentially, so it cannot be parallelized as easily. Its formulation can be derived from the following simple observation: since $\vec p_\ell^T \vec q_j = 0$ for all $\ell \ne j$, we can write 
\begin{equation*}
	I - \proj{Q}_i = \prod_{j = 1}^i (I - \vec q_j \vec p_j^T) \qquad \text{and} \qquad I - \proj{P}_i = \prod_{j = 1}^i ( I - \vec p_j \vec q_j^T ).
\end{equation*}  
Notice that $\vec q_j \vec p_j^T$ is the oblique projector onto $\vspan \{\vec q_j\}$ that acts orthogonally with respect to $\vspan \{ \vec p_j \}$, so using this implementation corresponds to sequentially subtracting from $\vec x_i$ its oblique projection onto each of the columns of $\mat Q_{i-1}$.
The resulting algorithm has better numerical stability with respect to CGS.

\subsubsection{Computational cost}
\label{subsubsec:computational-cost-ts-gram-schmidt}

The computational costs of CGS and MGS are essentially the same, and they are roughly double the cost of the standard Gram-Schmidt process, due to the involvement of two vectors per iteration. In each iteration, the main costs are the computation of the inner products of $\vec x_i$ (or its update, in the case of MGS) with the basis $\mat P_{i-1}$, and the computation of $\vec q_i$ as a linear combination of $\vec x_i$ and the columns of $\mat Q_{i-1}$, as well as the corresponding operations for the computation of $\vec y_i$. So the $i$-th iteration costs $\mathcal{O}(ni)$, and the overall cost for $m$ iterations is $\mathcal{O}(nm^2)$ for both CGS and MGS. The main advantage of CGS is its reliance of level-2 BLAS operations (matrix-vector products), which can be easily parallelized; on the other hand, MGS uses sequential level-1 BLAS operations (vector-vector products), which are more difficult to parallelize and slower on most computational architectures. The trade-off is that MGS has better numerical stability and, for this reason, it is often preferred to CGS. For both implementations, re-biorthogonalization can be employed to improve the stability of the two methods, by repeating lines 2 and 3 of \cref{algorithm:two-sided-gram-schmidt} multiple times to improve the biorthogonality of $\mat Q$ and $\mat P$ in finite precision. We refer to the variants of \cref{algorithm:two-sided-gram-schmidt} in which biorthogonalization using CGS (MGS) is repeated two or three times as CGS2 and CGS3 (MGS2 and MGS3), respectively.

Both CGS and MGS heavily rely on the fact that $\mat P_{i-1}$ and $\mat Q_{i-1}$ are biorthogonal in order to simplify the computation of $\proj{Q}_{i-1}$ and $\proj{P}_{i-1}$ in each iteration of \cref{algorithm:two-sided-gram-schmidt}. However, due to numerical errors this biorthogonality property does not hold exactly in finite precision, and hence $\mat P_{i-1}^T \mat Q_{i-1}$ can be quite far from the identity matrix in practice. An alternative way to account for this error would be to explicitly compute the projector $\proj{Q}_{i-1} = \mat Q_{i-1} (\mat P_{i-1}^T \mat Q_{i-1})^{-1} \mat P_{i-1}^T$. However, this approach would require updating the matrix $\mat P_{i-1}^T \mat Q_{i-1}$ at each iteration, which involves computing about $2i$ inner products in the $i$-th iteration, thus doubling the number of inner products with respect to the CGS algorithm. Instead, these inner products could have been used to update the previous columns $\vec q_{i-1}$ and $\vec p_{i-1}$ with a CGS2 implementation, so it is debatable whether it would be beneficial to explicitly apply the projectors without assuming that $\mat Q_{i-1}$ and $\mat P_{i-1}$ are biorthogonal.
We still mention this possibility now because it is going to be more viable in the randomized algorithm, where inner products are replaced by significantly cheaper sketched inner products.

\subsection{Subspace embeddings}
\label{subsec:subspace-embeddings}

In this section, we recall some basic properties of subspace embeddings, which will be useful in deriving the randomized two-sided Gram-Schmidt process in \cref{sec:randomized-two-sided-gram-schmidt}.

We recall that a sketching matrix $\sketch \in \R^{s \times n}$ is said to be an $\varepsilon$-subspace embedding for a subspace $\mathcal{Q} \subset \R^n$  with $\varepsilon \in (0, 1)$ if we have
\begin{equation}
	\label{eqn:epsilon-embedding-property}
	\abs{\scpr{\vec x, \vec y} - \scpr{\sketch \vec x, \sketch \vec y}} \le \varepsilon \norm{\vec x} \norm{\vec y} \qquad \text{for all }\vec x, \vec y \in \mathcal{Q}.
\end{equation}
Since $\sketch$ approximately preserves the inner products of vectors in $\mathcal{Q}$, it also approximately preserves the extremal singular values of a matrix $\mat Q$ whose columns span the subspace $\mathcal{Q}$. Indeed, we have \cite[Lemma~70]{Woodruff14}
\begin{equation}
\label{eqn:epsilon-embedding-singvals}
\begin{aligned}
	(1 - \varepsilon)^{1/2} \sigma_{\max}(\mat Q) &\le \sigma_{\max}(\sketch \mat Q) \le (1 + \varepsilon)^{1/2} \sigma_{\max}(\mat Q), \\
	(1 - \varepsilon)^{1/2} \sigma_{\min}(\mat Q) &\le \sigma_{\min}(\sketch \mat Q) \le (1 + \varepsilon)^{1/2} \sigma_{\min}(\mat Q),
\end{aligned}    
\end{equation}
and therefore
\begin{equation}
	\label{eqn:epsilon-embedding-condition-number}
	\left(\frac{1-\varepsilon}{1+\varepsilon}\right)^{1/2} \kappa(\sketch \mat Q) \le \kappa(\mat Q) \le \left(\frac{1+\varepsilon}{1-\varepsilon}\right)^{1/2} \kappa(\sketch \mat Q).
\end{equation}
A bound similar to \cref{eqn:epsilon-embedding-singvals} also holds for other non-extremal singular values, see for instance \cite[Theorem~2.2]{GrigoriXue25}.

In practical settings, we often want a sketching matrix $\sketch$ to satisfy \cref{eqn:epsilon-embedding-property} for an unknown subspace $\mathcal{Q}$, so $\sketch$ is usually drawn at random according to a certain distribution, in order to satisfy $\cref{eqn:epsilon-embedding-property}$ with high probability for any subspace of a certain dimension $m$; such a sketching matrix is known in the literature as an oblivious $\varepsilon$-subspace embedding. It is also desirable for $\sketch$ to be cheap to apply. In this work, we use as sketching matrix $\sketch$ a sparse sign matrix, but several alternatives have been considered in the literature, such as for example randomized subsampled Fourier, cosine or Hadamard transforms (see, e.g., \cite{BalabanovGrigori22} and \cite{NakatsukasaTropp24}).

\subsubsection{Sparse sign matrix}
\label{subsubsec:sparsematrix}

The sparse sign matrix \cite{martinsson_tropp_2020, meng_mahoney_2013} is a common choice due to its sparsity, which makes its action on a vector very efficient to compute. Given a fixed sparsity parameter $\zeta$ with $2\le \zeta \le s$, the matrix $\sketch \in\mathbb{R}^{s\times n}$ is constructed as
\begin{equation*}
    \sketch = \sqrt{\frac{n}{\zeta}} [ \vec s_1, \ldots, \vec s_n ],
\end{equation*}
where the columns $\vec s_i \in \mathbb{R}^s$ are i.i.d.~random vectors. To build each of the vectors, starting from $\vec s_i = \vec 0$, we choose $\zeta$ random entries and assign to them a random value in $\{-1, 1\}$, while leaving the other entries equal to $0$. In \cite{TYUC2019}, a recommended choice is $\zeta = \min\{s,8\}$. The sparse sign matrix~$\sketch$ can be applied to a vector in $\mathbb{R}^n$ with $\mathcal{O}(\zeta n)$ arithmetic operations.
It was shown in \cite{Cohen16} that a sparse sign matrix is an oblivious $\varepsilon$-subspace embedding for subspaces of dimension $m$ when $s = \mathcal{O}(\varepsilon^{-2} m \log m)$ and $\xi = \mathcal{O}(\varepsilon^{-1} \log m)$.

\section{Randomized two-sided Gram-Schmidt process}
\label{sec:randomized-two-sided-gram-schmidt}

In this section we present the randomized two-sided Gram-Schmidt algorithm, which employs randomized sketching to compute bases $\mat P$ and $\mat Q$ that satisfy the sketched biorthogonality condition $(\sketch \mat Q)^T \sketch \mat P = I$, where $\sketch \in \R^{s \times n}$ is a sketching matrix that is an $\varepsilon$-subspace embedding for $\range(\mat X)$ and $\range(\mat Y)$. We begin by introducing sketched oblique projectors and their properties in order to lay the theoretical foundation for the algorithm.

\subsection{Sketched oblique projectors}
\label{subsec:sketched-oblique-projectors}

Let us assume that the two subspaces $\mathcal{Q}$ and $\mathcal{P}$ are such that $\sketch \mathcal{Q} \cap (\sketch \mathcal{P})^\perp = \{ 0 \}$. 
We can define the sketched oblique projector $\skproj{Q}$ as the projector onto $\mathcal{Q}$ that projects sketch-orthogonally with respect to $\mathcal{P}$, that is, for any $\vec x \in \R^n$ we define $\skproj{Q}\vec x \in \mathcal{Q}$ as the unique element of $\mathcal{Q}$ such that $\vec x - \skproj{Q} \vec x \perp_\sketch \mathcal{P}$. 
Similarly, we define $\skproj{P}$ as the oblique projector onto $\mathcal{P}$ such that $\vec x - \skproj{P}\vec x \perp_\sketch \mathcal{Q}$ for all $\vec x \in \R^n$.  

The following proposition summarizes the main properties of the sketched oblique projector, which closely resemble the ones given in \cref{prop:oblique-projector-properties} for the oblique projector.

\begin{proposition}
	\label{prop:sketched-oblique-projector-properties}
	Let $\mathcal{Q}$, $\mathcal{P} \subset \R^n$ be subspaces of dimension $m$ and let $\mat Q$, $\mat P \in \R^{n \times m}$ be bases of $\mathcal{Q}$ and $\mathcal{P}$, respectively. Let $\sketch \in \R^{s \times n}$ be an $\varepsilon$-subspace embedding for $\mathcal{Q}$ and $\mathcal{P}$ for some $\varepsilon \in (0,1)$, such that $\sketch \mathcal{Q} \cap (\sketch \mathcal{P})^\perp = \{ 0 \}$. Denoting by $\skproj{Q}$ and $\skproj{P}$ the sketched oblique projectors onto $\mathcal{Q}$ and $\mathcal{P}$ as defined above, the following properties hold:
	\begin{enumerate}[label=(\roman*)]
		\item 
		For any $\vec x \in \R^n$, there exists a unique $\vec z \in \mathcal{Q}$ such that $\vec x - \vec z \perp_\sketch \mathcal{P}$, so the projector $\skproj{Q}$ is well defined.
		\item
		For any $\vec x$ and $\vec y \in \R^n$, we have $\scpr{\sketch \skproj{Q} \vec x, \sketch \vec y} = \scpr{\sketch \vec x, \sketch \skproj{P} \vec y}$.
		\item For any $\vec x \in \R^n$, if $(\sketch \mat Q)^T \sketch \mat P = I$ we have 
		\begin{equation*}
			\skproj{Q} \vec x = \argmin_{\vec z \in \mathcal{Q}} \norm{(\sketch \mat P)^T \sketch (\vec x - \vec z)}.
		\end{equation*} 
		\item We have $\skproj{Q} = \mat Q ((\sketch \mat P)^T \sketch \mat Q)^{-1} (\sketch \mat P)^T \sketch$. In particular, if $(\sketch \mat Q)^T \sketch \mat P = I$ we have $\skproj{Q} = \mat Q (\sketch \mat P)^T \sketch$.
	\end{enumerate}
\end{proposition}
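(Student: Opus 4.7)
The proof will follow the same template as \cref{prop:oblique-projector-properties}, with the only substantive change being that the relevant inner product is the one induced by $\sketch$, and that we need the $\varepsilon$-embedding hypothesis to ensure certain $m \times m$ matrices are invertible. The first thing I would do, before addressing the individual items, is observe that the $\varepsilon$-embedding property implies $\sigma_{\min}(\sketch \mat Q) > 0$ and $\sigma_{\min}(\sketch \mat P) > 0$ by \cref{eqn:epsilon-embedding-singvals}, so that $\sketch$ is injective on $\mathcal{Q}$ and on $\mathcal{P}$. Combined with the assumption $\sketch \mathcal{Q} \cap (\sketch \mathcal{P})^\perp = \{0\}$, this yields that the $m \times m$ matrix $(\sketch \mat P)^T \sketch \mat Q$ is invertible: if $(\sketch \mat P)^T \sketch \mat Q \vec v = 0$ then $\sketch \mat Q \vec v \in \sketch \mathcal{Q} \cap (\sketch \mathcal{P})^\perp = \{0\}$, and injectivity of $\sketch$ on $\mathcal{Q}$ forces $\vec v = 0$. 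This single fact underlies all four items.

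For $(i)$, uniqueness follows as in \cref{prop:oblique-projector-properties}: if $\vec z_1, \vec z_2 \in \mathcal{Q}$ both satisfy $\vec x - \vec z_j \perp_\sketch \mathcal{P}$, then $\sketch(\vec z_1 - \vec z_2)$ lies in $\sketch \mathcal{Q} \cap (\sketch \mathcal{P})^\perp = \{0\}$ and injectivity of $\sketch$ on $\mathcal{Q}$ gives $\vec z_1 = \vec z_2$. For existence I would write $\vec z = \mat Q \vec w$ and reduce the condition $(\sketch \mat P)^T \sketch (\vec x - \vec z) = 0$ to the linear system $(\sketch \mat P)^T \sketch \mat Q \vec w = (\sketch \mat P)^T \sketch \vec x$, which is solvable by invertibility. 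For $(ii)$, I would decompose $\vec x = \skproj{Q}\vec x + \vec v$ with $\vec v \perp_\sketch \mathcal{P}$ and $\vec y = \skproj{P}\vec y + \vec u$ with $\vec u \perp_\sketch \mathcal{Q}$, and compute both $\scpr{\sketch \skproj{Q}\vec x, \sketch \vec y}$ and $\scpr{\sketch \vec x, \sketch \skproj{P}\vec y}$; each one simplifies to $\scpr{\sketch \skproj{Q}\vec x, \sketch \skproj{P}\vec y}$ because the crossed terms vanish by the defining sketch-orthogonality of $\vec u$ and $\vec v$.

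For $(iii)$, using the decomposition $\vec x = \skproj{Q}\vec x + \vec v$ with $(\sketch \mat P)^T \sketch \vec v = 0$, for any $\vec z \in \mathcal{Q}$ the quantity $\|(\sketch \mat P)^T \sketch(\vec x - \vec z)\|$ collapses to $\|(\sketch \mat P)^T \sketch(\skproj{Q}\vec x - \vec z)\|$, which, under $(\sketch \mat Q)^T \sketch \mat P = I$, is minimized uniquely at $\vec z = \skproj{Q}\vec x$. For $(iv)$, I would define $\mat A := \mat Q ((\sketch \mat P)^T \sketch \mat Q)^{-1} (\sketch \mat P)^T \sketch$ and verify that it is a projector with range $\mathcal{Q}$ by a direct computation of $\mat A^2$ and $\mat A \mat Q$; then for any $\mat P \vec z \in \mathcal{P}$ and $\vec x \in \R^n$ I would check that $(\sketch \mat P \vec z)^T \sketch (\vec x - \mat A \vec x) = 0$, so $\mat A$ satisfies the defining sketch-orthogonality condition of $\skproj{Q}$ and coincides with it by the uniqueness from $(i)$. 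The specialization under $(\sketch \mat Q)^T \sketch \mat P = I$ is then immediate.

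There is no real obstacle here: the only subtle point is recognizing that one must separately argue for $(\sketch \mat P)^T \sketch \mat Q$ to be invertible (since, unlike in the deterministic setting, the $\varepsilon$-embedding hypothesis must do some work), after which every argument is an $\sketch$-twisted copy of the proof of \cref{prop:oblique-projector-properties}.
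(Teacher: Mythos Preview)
Your proposal is correct and follows essentially the same approach as the paper's proof, item by item. The only minor difference is in the existence part of $(i)$: you solve the linear system $(\sketch \mat P)^T \sketch \mat Q \vec w = (\sketch \mat P)^T \sketch \vec x$ directly after establishing invertibility of $(\sketch \mat P)^T \sketch \mat Q$ upfront, whereas the paper argues via the direct sum decomposition $\R^s = \sketch\mathcal{Q} \oplus (\sketch\mathcal{P})^\perp$; both routes are equivalent and equally valid.
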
 
\begin{proof}
    To prove $(i)$, observe that since $\sketch \mathcal{Q} \cap (\sketch \mathcal{P})^\perp = \{ 0 \}$, for any vector $\vec x\in\bb{R}^n$ there is a unique representation $\sketch \vec x = \vec u + \vec v$, where $\vec u \in\sketch \mathcal{Q}$ and $\vec v \in (\sketch \mathcal{P})^\perp$. We can write $\vec u = \sketch \vec z$ for some $\vec z \in \mathcal{Q}$, so $\vec x - \vec z \perp_\sketch \mathcal{P}$. To conclude that $\skproj{Q} \vec x = \vec z$ is well defined, it only remains to show that $\vec z$ is unique. If we assume that we also have $\vec u = \sketch \vec w$, with $\vec w \ne \vec z$, $\vec w \in \mathcal{Q}$, we would obtain $\sketch (\vec z - \vec w) = \vec 0$, which contradicts the $\varepsilon$-subspace embedding property for $\sketch$. The uniqueness of $\vec z$ then follows from the uniqueness of $\vec u$.  
	
	For $(ii)$, we know from $(i)$ that we can write $\sketch \vec y = \sketch \skproj{P}\vec y + \vec v$, with $\vec v\in(\sketch \mathcal{Q})^\perp$, and $\sketch \vec x = \sketch \skproj{Q} \vec x + \vec u$, with $\vec u\in(\sketch \mathcal{P})^\perp$. This leads to
    \begin{align*}
        \scpr{\sketch \skproj{Q}\vec x, \sketch \vec y} &= \scpr{\sketch \skproj{Q}\vec x, \sketch \skproj{P}\vec y} + \scpr{\sketch \skproj{Q}\vec x, \vec v} = \scpr{\sketch \skproj{Q}\vec x, \sketch \skproj{P}\vec y},\\
        \scpr{\sketch \vec x, \sketch \skproj{P}\vec y} &= \scpr{\sketch\skproj{Q}\vec x, \sketch \skproj{P}\vec y} + \scpr{\vec u, \sketch \skproj{P}\vec y} = \scpr{\sketch \skproj{Q}\vec x, \sketch \skproj{P}\vec y},
    \end{align*}
    proving the equality.

    For $(iii)$, we can write $\vec x = \skproj{Q}\vec x + \vec u$ such that $\sketch \vec u\in (\sketch \mathcal{P})^\perp$. It follows that
    \begin{equation*}
        \| (\sketch \mat P)^T \sketch (\vec x - \vec z) \| = \| (\sketch \mat P)^T (\sketch \skproj{Q}\vec x + \sketch \vec u - \sketch \vec z) \| = \| (\sketch \mat P)^T \sketch \skproj{Q} \vec x - (\sketch \mat P)^T \sketch \vec z \|,
    \end{equation*} which is minimized by taking $\vec z = \skproj{Q}\vec x$.

    For $(iv)$, first we check that $\mat Q ((\sketch \mat Q)^T \sketch \mat P)^{-1} (\sketch \mat P)^T \sketch$ is a projector onto $\mathcal Q$. Indeed, we have $\range(\mat Q ((\sketch \mat Q)^T \sketch \mat P)^{-1} (\sketch \mat P)^T \sketch) = \mathcal{Q}$ and 
    \begin{equation*}
        \left(\mat Q ((\sketch \mat Q)^T \sketch \mat P)^{-1} (\sketch \mat P)^T \sketch \right) \mat Q = \mat Q.
    \end{equation*}
    So we just have to show that the condition in $(i)$ holds. Given a vector $\vec x \in\bb{R}^n$, and any vector $\vec y\in\sketch \mathcal{P}$, we can write $\vec y = \sketch \mat P \vec z$ and we have
    \begin{equation*}
        (\sketch \mat P \vec z)^T \sketch \vec x - (\sketch \mat P \vec z)^T \sketch \mat Q ((\sketch \mat P)^T \sketch \mat Q)^{-1} (\sketch \mat P)^T \sketch \vec x = \vec z^T(\sketch \mat P)^T \sketch \vec x - \vec z^T(\sketch \mat P)^T \sketch \vec x = 0.
    \end{equation*}
    This proves that $\mat Q ((\sketch \mat Q)^T \sketch \mat P)^{-1} (\sketch \mat P)^T \sketch = \skproj{Q}$. 
    If $(\sketch \mat Q)^T \sketch \mat P = I$, it follows immediately that $\skproj{Q} = \mat Q(\sketch \mat P)^T \sketch$. This concludes the proof.
\end{proof}

Note that the assumption $\sketch \mathcal{Q} \cap (\sketch \mathcal{P})^\perp = \{ 0 \}$ is equivalent to $\mathbb{R}^{s} = \sketch\mathcal{Q} \oplus \sketch\mathcal{P}^\perp$, which in turn implies $\mathbb{R}^n = \mathcal{Q} \oplus \mathcal{P}^{\perp_\sketch}$, where $\mathcal{P}^{\perp_\sketch} := \{ \vec v \in \R^n : \scpr{\sketch \vec v, \sketch\vec p} = 0 \: \forall \vec p \in \mathcal{P}\}$. To prove this, assume we have a vector $\vec u \in\mathcal{Q}\cap \mathcal{P}^{\perp_\sketch}$, $\vec u \ne 0$. Then by definition $\sketch \vec u\in\sketch\mathcal{Q}$, and since $\vec u \in\mathcal{P}^{\perp_\sketch}$, for any $\vec p\in \mathcal{P}$ we have $\scpr{\sketch \vec u, \sketch \vec p} = 0$, so $\sketch \vec u \in (\sketch \mathcal{P})^\perp$. Since $\sketch \mathcal{Q} \cap (\sketch \mathcal{P})^\perp = \{ 0 \}$, we conclude that $\sketch \vec u = \vec 0$.
However, the $\varepsilon$-subspace embedding property implies that
\begin{equation*}
	\norm{\vec u}^2 = \abs{ \norm{\vec u}^2 - \norm{\sketch \vec u}^2 } \le \varepsilon \norm{\vec u}^2,
\end{equation*}
which is a contradiction for any $\varepsilon \in (0,1)$.

\subsection{General algorithm}
\label{subsec:rgs-general-algorithm}

To describe the randomized two-sided Gram-Schmidt process in its general formulation, we introduce a notation similar to the one used in \cref{subsec:two-sided-gram-schmidt}. Given the two matrices $\mat X$ and $\mat Y \in \R^{n \times m}$, recall that we denote by $\mathcal{Q}_i = \range(\mat X_i)$ and $\mathcal{P}_i = \range(\mat Y_i)$. For $i = 1, \dots, m$, we denote by $\skproj{Q}_{i}$ the sketched oblique projector onto $\mathcal{Q}_i$ that acts sketch-orthgonally with respect to $\mathcal{P}_i$, i.e., such that for any $\vec x \in \R^n$ we have $\skproj{Q}_i \vec x \perp_\sketch \mathcal{P}_i$. Likewise, we denote by $\skproj{P}_i$ the sketched oblique projector onto $\mathcal{P}_i$ that acts sketch-orthogonally with respect to $\mathcal{Q}_i$. Using this notation we can formulate a randomized two-sided Gram-Schmidt process, which constructs two sketch-biorthogonal bases $\mat Q$ and $\mat P$ for $\mathcal{Q}$ and $\mathcal{P}$, i.e.,~such that $(\sketch \mat Q)^T (\sketch \mat P) = I$. The algorithm is given in \cref{algorithm:two-sided-gram-schmidt-sketched} in its general form.
Similarly to \cref{algorithm:two-sided-gram-schmidt}, several different implementations are possible depending on how the sketched projectors $\skproj{Q}_{i-1}$ and $\skproj{P}_{i-1}$ are applied on lines 2 and 4. 

\begin{remark}
	Similarly to what happens in \cref{algorithm:two-sided-gram-schmidt}, if $\scpr{\sketch \vec q_i, \sketch \vec p_i} = 0$ then \cref{algorithm:two-sided-gram-schmidt-sketched} breaks down at iteration $i$. Again, in the following we assume that this breakdown never occurs. In \cref{sec:theoretical-analysis} we are going to analyze the behavior of $\scpr{\sketch \vec q_i, \sketch \vec p_i}$, showing that it is unlikely to be very close to $0$. This implies that the condition numbers of the bases $\mat Q_i$ and $\mat P_i$ are not expected to grow rapidly with $i$, suggesting that the sketch-biorthogonal bases will typically be better conditioned than the ones obtained from \cref{algorithm:two-sided-gram-schmidt}.
\end{remark}

\begin{algorithm}[t]
	\caption{Randomized two-sided Gram-Schmidt process
	\label{algorithm:two-sided-gram-schmidt-sketched}}
	\begin{algorithmic}[1]
	\REQUIRE $\mat X$, $\mat Y \in \R^{n\times m}$, sketching matrix $\sketch \in \R^{s \times n}$. 
	\ENSURE $\mat Q$, $\mat P \in \R^{n\times m}$ s.t.~$\range(\mat Q) = \range(\mat X)$, $\range(\mat P) = \range(\mat Y)$ and $(\sketch \mat Q)^T (\sketch \mat P) = I$.
	\FOR {$i=1, \ldots, m$}
	\STATE $\vec q_i = \vec x_i - \skproj{Q}_{i-1} \vec x_i$   
	\STATE $\vec p_i = \vec y_i - \skproj{P}_{i-1} \vec y_i$   
	\STATE $d_i = \scpr{\sketch \vec p_i, \sketch \vec q_i}$ 
	\IF {$d_i = 0$}
		\RETURN \COMMENT{breakdown}
	\ENDIF
	\STATE $\vec q_i = \vec q_i / \sqrt{d_i}$
	\STATE $\vec p_i = \vec p_i / \sqrt{d_i} \cdot \sign(d_i)$
	\ENDFOR
	\STATE $\mat Q = [\vec q_1, \dots, \vec q_m]$ 
	\STATE $\mat P = [\vec p_1, \dots, \vec p_m]$ 
	\end{algorithmic}
\end{algorithm}

\subsubsection{Randomized classical Gram-Schmidt}
\label{subsubsec:randomized-classical-gram-schmidt}

If we exploit the fact that in exact arithmetic $(\sketch \mat P_{i-1})^T \sketch \mat Q_{i-1} = I \in \R^{(i-1) \times (i-1)}$ for all $i = 1, \dots, m$, we can apply the sketched projectors as 
\begin{equation*}
	\begin{aligned}
		\skproj{Q}_{i-1} \vec x_i &= \mat Q_{i-1} (\sketch \mat P_{i-1})^T \sketch \vec x_i, \\
		\skproj{P}_{i-1} \vec y_i &= \mat P_{i-1} (\sketch \mat Q_{i-1})^T \sketch \vec y_i.
	\end{aligned}
\end{equation*} 
Due to its similarity to CGS, we refer to this variant as randomized classical Gram-Schmidt (rCGS).

The main difference bewteen the CGS and rCGS algorithms is that the latter replaces inner products with sketched inner products, so the cost for the update of $\vec q_i$ in the $i$-th iteration is reduced from $4 n i$ to $2 n i + 2 s i + \xi n$ flops, where $2si$ is the cost of the sketched inner products and $\xi n$ is the cost of applying the sparse sign sketching matrix $\sketch$, as seen in \cref{subsubsec:sparsematrix}. Since $\vec p_i$ is updated similarly to $\vec q_i$, the overall cost of an iteration of rCGS is therefore slightly more than half the cost of the corresponding CGS iteration. This means that, for example, with approximately the same cost of CGS we can implement rCGS2, i.e., apply the rCGS procedure twice, thus improving the numerical stability of the method.

\subsubsection{Randomized modified Gram-Schmidt}
\label{subsubsec:randomized-modified-gram-schmidt}

Alternatively, we can also write $I - \skproj{Q}_{i-1}$ in a similar way to MGS, by splitting the projector as a product of projectors onto the span of each single basis vector. We have
\begin{equation*}
	\skproj{Q}_{i-1} = \sum_{j = 1}^{i-1} \vec q_j (\sketch \vec p_j)^T \sketch,
\end{equation*}
and thus
\begin{equation*}
	I - \skproj{Q}_{i-1} = \prod_{j = 1}^{i-1} (I - \vec q_j (\sketch \vec p_j)^T \sketch),
\end{equation*} 
where we used the fact that $(\sketch \vec p_j)^T \sketch \vec q_\ell = 0$ for $j \ne \ell$. We refer to this implementation as randomized modified Gram-Schmidt (rMGS).
The computational cost of rMGS is the same as for rCGS, so it is roughly half the cost of MGS. 
As in the case of \cref{algorithm:two-sided-gram-schmidt}, rMGS relies on a sequence of vector-vector operations, while rCGS can be implemented using matrix-vector products. Thus, rCGS is easier to parallelize than rMGS and it is faster on most computational architectures, but this comes at a trade-off in the numerical stability of the algorithm.

\subsubsection{Randomized Gram-Schmidt with explicit oblique projection}
\label{subsubsec:randomized-gram-schmidt-with-explicit-projection}

The sketched oblique projector can be alternatively applied by computing it explicitly, without assuming that the matrix $(\sketch \mat P)^T \sketch \mat Q$ is the identity. In other words, we explicitly compute
\begin{eqnarray*}
	\skproj{Q}_{i-1} \vec x_i &=& \mat Q_{i-1} ((\sketch \mat P_{i-1})^T \sketch \mat Q_{i-1})^{-1} (\sketch \mat P_{i-1})^T \sketch \vec x_i, \\
	\skproj{P}_{i-1} \vec x_i &=& \mat P_{i-1} ((\sketch \mat Q_{i-1})^T \sketch \mat P_{i-1})^{-1} (\sketch \mat Q_{i-1})^T \sketch \vec x_i. 
\end{eqnarray*}
From here on, we are going to refer to this approach as CGS\_O for the deterministic version and rCGS\_O for the randomized one.
This approach is more expensive compared to rCGS and rMGS, as already mentioned in \cref{subsubsec:computational-cost-ts-gram-schmidt} in the deterministic case; however, the use of sketching makes the computation of the product $(\sketch \mat P_{i-1})^T \sketch \mat Q_{i-1}$ significantly less expensive than the computation of $\mat P_{i-1}^T \mat Q_{i-1}$, making this approach more competitive in the randomized setting. Specifically, the $i$-th iteration of the rCGS\_O implementation requires the computation of about $2i$ sketched inner products to update the matrix $(\sketch \mat P_{i-1})^T \sketch \mat Q_{i-1}$ in addition to the $2i$ sketched inner products computed by rCGS and rMGS, for a total of about $4i$ sketched inner products, and the solution of an $(i-1) \times (i-1)$ linear system, which costs $O(i^3)$. 
With a naive implementation, executing $m$ iterations would result in an overall computational cost of $O(m^4)$ for the linear system solves. However, by iteratively updating an LU factorization of $(\sketch \mat P_{i-1})^T \sketch \mat Q_{i-1}$, the total cost of the solves in iterations $1$ through $m$ is reduced to $O(m^3)$.  
Observe that, in contrast with the deterministic CGS\_O approach, the randomized variant has the same leading cost as CGS, since all the additional operations involve sketched vectors. Hence, especially when $n \gg m$, we expect that rCGS\_O will be only slightly slower than rCGS, but potentially more numerically stable. These observations will be confirmed by the numerical experiments in \cref{subsec:experiment-condition-number-growth}.

\section{Theoretical analysis}
\label{sec:theoretical-analysis}

In this section we obtain some theoretical results that allow us to better understand the behavior of the randomized two-sided Gram-Schmidt process.

Given two bases $\mat X, \mat Y \in \R^{n \times m}$, in this section we denote by $\mat Q$ and $\mat P$ the biorthogonal bases obtained by applying the deterministic two sided Gram-Schmidt process to $\mat X$ and $\mat Y$, and by $\mat Q^\sketch$ and $\mat P^\sketch$ the ones obtained by applying randomized two-sided Gram-Schmidt, satisfying $(\sketch \mat Q^\sketch)^T \sketch \mat P^\sketch = I$.
First of all, it immediately follows from \cref{eqn:epsilon-embedding-condition-number} that we can monitor the condition number of $\mat Q^\sketch$ and $\mat P^\sketch$ by monitoring the condition number of $\sketch \mat Q^\sketch$ and $\sketch \mat P^\sketch$, respectively. 
However, this does not imply that $\kappa(\mat Q)$ and $\kappa(\mat Q^\sketch)$ are close to each other, and it turns out that in certain cases $\mat Q^\sketch$ can be significantly better conditioned than $\kappa(\mat Q)$; in particular, this behavior is different from the standard Gram-Schmidt process for orthogonalization, where both $\mat Q$ and $\sketch \mat Q^\sketch$ have condition number equal to one. 
The discussion that follows aims to get some insight on the differences between Gram-Schmidt and randomized Gram-Schmidt, and understand in which situations one can be expected to perform better. In particular, the following proposition shows that the sketches of two (possibly orthogonal) vectors have a very small probability of being almost orthogonal. 

\begin{proposition}
	\label{prop:sketched-inner-product--gaussian}
	Let $\vec x, \vec y \in \R^n$ be vectors of unit norm, and let $\sketch \in \R^{s \times n}$ be a Gaussian sketching matrix. Then for $s \ge 2$ and any $\delta > 0$ we have 
	\begin{alignat*}{2}
		\prob (\abs{\scpr{\sketch \vec x, \sketch \vec y}} &\le \delta) \le 
		202 \, s \delta \qquad &\text{if} \quad \vec x \ne \vec y,\\
		\prob (\abs{\scpr{\sketch \vec x, \sketch \vec y}} &\le \delta) \le 
		(\delta e^{1 - \delta})^{s/2} \qquad &\text{if} \quad \vec x = \vec y.
	\end{alignat*}
\end{proposition}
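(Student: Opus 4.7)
The plan is to handle the two cases separately.

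If $\vec x = \vec y$, then $\scpr{\sketch \vec x, \sketch \vec x} = \norm{\sketch \vec x}^2 \ge 0$, and since the entries of $\sketch$ are i.i.d.\ $N(0, 1/s)$ and $\norm{\vec x} = 1$, the scaled quantity $s\norm{\sketch \vec x}^2$ follows a $\chi^2_s$ distribution; hence the event in question reduces to $\chi^2_s \le s\delta$. I would obtain the stated bound $(\delta e^{1-\delta})^{s/2}$ by a standard Chernoff estimate on the lower tail of $\chi^2_s$: apply Markov's inequality to $e^{-tZ}$ using the moment generating function $\expec[e^{-tZ}] = (1+2t)^{-s/2}$ and optimize over $t > 0$, which after substitution yields precisely the claimed expression (the optimizer is $t^* = (1-\delta)/(2\delta)$).

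For $\vec x \ne \vec y$ my strategy is to use the polarization identity
\[
\scpr{\sketch \vec x, \sketch \vec y} = \tfrac{1}{4}\bigl( \norm{\sketch(\vec x + \vec y)}^2 - \norm{\sketch(\vec x - \vec y)}^2 \bigr),
\]
together with the observation that $\norm{\vec x} = \norm{\vec y} = 1$ implies $(\vec x + \vec y)^T (\vec x - \vec y) = 0$, so the Gaussian vectors $\sketch(\vec x + \vec y)$ and $\sketch(\vec x - \vec y)$ are jointly Gaussian and uncorrelated, hence independent. Setting $\rho = \vec x^T \vec y$, this yields the decomposition
\[
\scpr{\sketch \vec x, \sketch \vec y} = \frac{\alpha Z - \beta W}{s}, \qquad \alpha = \tfrac{1+\rho}{2}, \quad \beta = \tfrac{1-\rho}{2},
\]
with $Z, W$ independent $\chi^2_s$ variables and $\alpha + \beta = 1$, $\alpha, \beta \in [0, 1]$. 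Since $Z$ and $W$ are exchangeable, the law of $\alpha Z - \beta W$ is invariant under $\alpha \leftrightarrow \beta$, so without loss of generality I assume $\alpha \ge 1/2$. Conditioning on $W$, the event $|\alpha Z - \beta W| \le s\delta$ constrains $Z$ to an interval of length $2s\delta/\alpha \le 4 s\delta$, and applying a uniform upper bound on the $\chi^2_s$ density (bounded by $1/2$ at its mode for $s \ge 2$) gives a conditional, and hence unconditional, probability of order $s\delta$, which is comfortably below the stated $202\, s\delta$.

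The hard part is producing a bound uniform in $\rho$. The most direct approach — conditioning on $\sketch \vec x$ and using that $\scpr{\sketch \vec x, \sketch \vec y} \mid \sketch \vec x \sim N\bigl(\rho\norm{\sketch \vec x}^2,\, (1-\rho^2)\norm{\sketch \vec x}^2/s\bigr)$ — degenerates as $|\rho| \to 1$, because the conditional variance collapses and the Gaussian anti-concentration estimate blows up. The polarization identity sidesteps this by transferring all the $\rho$-dependence into the two scalar weights $\alpha, \beta$: since $\alpha + \beta = 1$, at least one of them is $\ge 1/2$, so the chi-squared factor multiplied by the larger weight always retains enough density spread to support a uniform anti-concentration estimate. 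This also covers the degenerate boundary $\vec x = -\vec y$, in which polarization reduces $\scpr{\sketch \vec x, \sketch \vec y}$ to $-\norm{\sketch \vec x}^2$ and the chi-squared tail bound from the first case yields a probability well below $202\, s\delta$.
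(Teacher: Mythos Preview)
Your treatment of the case $\vec x = \vec y$ is exactly the paper's: both recognize $s\norm{\sketch \vec x}^2 \sim \chi^2_s$ and apply the standard Chernoff lower-tail bound.

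For $\vec x \ne \vec y$, your argument is correct but genuinely different from the paper's. The paper writes $\scpr{\sketch \vec x, \sketch \vec y}$ as a sum of $s$ i.i.d.\ terms $Z_j = \frac{1}{s}\scpr{\vec \omega_j, \vec x}\scpr{\vec \omega_j, \vec y}$, bounds $\prob(|Z| \le \delta) \le 2\delta \norm{f_Z}_\infty$, and then controls $\norm{f_Z}_\infty$ via Young's convolution inequality and an explicit $L^p$ estimate on the density of a product of two correlated standard normals, which involves the modified Bessel function $K_0$. Obtaining a bound uniform in $\rho = \scpr{\vec x, \vec y}$ requires a fairly delicate splitting of the resulting integral and a separate technical lemma on $K_0$ near zero; this is where the constant $202$ arises.

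Your polarization route bypasses all of this. The key observation that $\vec x + \vec y \perp \vec x - \vec y$ (because $\norm{\vec x} = \norm{\vec y}$) makes $\sketch(\vec x + \vec y)$ and $\sketch(\vec x - \vec y)$ independent Gaussians, so $\scpr{\sketch \vec x, \sketch \vec y} = (\alpha Z - \beta W)/s$ with $Z, W$ independent $\chi^2_s$ and $\alpha + \beta = 1$. Conditioning on the variable with the smaller weight and using $\sup f_{\chi^2_s} \le 1/2$ for $s \ge 2$ gives a bound of order $2s\delta$, roughly a factor $100$ sharper than the paper's. The argument is entirely elementary, avoiding Bessel functions and convolution inequalities. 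The trade-off is that the independence step is specific to Gaussian sketches (orthogonal directions map to independent sketches), whereas the paper's density-based machinery could in principle be adapted to other row distributions; but since the proposition is stated only for Gaussian $\sketch$, your approach is cleaner for the result at hand.
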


\begin{proof}
	We can write $\sketch = \frac{1}{\sqrt{s}} \begin{bmatrix}
		\vec \omega_1 & \vec \omega_2 & \dots & \vec \omega_s
	\end{bmatrix}^T$, where $\vec \omega_j \in \R^n$ are independent standard Gaussian random vectors. We have
	\begin{equation*}
		Z := \scpr{\sketch \vec x, \sketch \vec y} = \frac{1}{s} \begin{bmatrix}
			\scpr{\vec \omega_1, \vec x} & \dots & \scpr{\vec \omega_s, \vec x}
		\end{bmatrix}^T \begin{bmatrix}
			\scpr{\vec \omega_1, \vec y} & \dots & \scpr{\vec \omega_s, \vec y}
		\end{bmatrix} = \frac{1}{s} \sum_{j = 1}^s \scpr{\vec \omega_j, \vec x} \scpr{\vec \omega_j, \vec y}.
	\end{equation*}
Defining $Z_j := \frac{1}{s}\scpr{\vec \omega_j, \vec x} \scpr{\vec \omega_j, \vec y}$, we have $Z = \sum_{j = 1}^s Z_j$. Note that $Z_j$ are i.i.d.~random variables, since the rows $\vec \omega_j$ of $\sketch$ are all i.i.d.~random variables.
Let us denote by $f_Z$ and $f_{Z_1}$ the probability density functions of $Z$ and $Z_1$, respectively. We have the straightforward bound
\begin{equation}
	\label{eqn:proof-sketch-ip--prob-infnorm-bound}
	\prob (\abs{\scpr{\sketch \vec x, \sketch \vec y}} \le \delta) = \int_{-\delta}^{\delta} f_Z(t) \, dt \le 2 \delta \norm{f_Z}_{\infty},
\end{equation}    
so it is sufficient to find an upper bound for $\norm{f_Z}_\infty$.  
Since the random variables $Z_j$, for $1 \le j \le s$, are independent and have the same density $f_{Z_1}$, we  have $f_Z = f_{Z_1} * \cdots * f_{Z_s} = f^{*s}_{Z_1}$. Using Young's convolution inequality, for any $k = 2, \dots, s-1$ we have
\begin{equation}
	\label{eqn:proof-sketch-ip--convolution-inequality}
	\norm{f^{*s}_{Z_1}}_\infty \le \norm{f^{*k}_{Z_1}}_{\infty} \, \norm{f^{*(s-k)}_{Z_1}}_{1} \le \norm{f_{Z_1}}_{p}^k \, \norm{f^{*(s-k)}_{Z_1}}_{1} \le \norm{f_{Z_1}}_p^k, \qquad \text{where $p = \frac{k}{k-1}$}.
\end{equation}
We are going to consider $k \ge 3$, so that we have $p \in (1,2)$. This assumption on $p$ is required in order to obtain a bound uniform in $\scpr{\vec x, \vec y}$. 

Our goal is then to obtain an upper bound for $\norm{f_{Z_1}}_p$. We can write $Z_1 = \frac{1}{s} UV$, where $U = \scpr{\vec \omega_j, \vec x}$ and $V = \scpr{\vec \omega_j, \vec y}$. Let us first assume that $\vec x \ne \vec y.$ In this case, the random pair $(U, V)$ is a bivariate Gaussian such that 
\begin{equation*}
\expec[U] = \expec[V]=0,\quad
\Var(U) = \Var(V) = 1,\quad
\Cov(U,V) = \scpr{\vec x, \vec y},
\end{equation*}
and it follows from \cite[Theorem~2.1]{NadarajahPogany16} that the product $UV$ has probability density function
\begin{equation*}
	f_{UV}(z) := \frac{1}{\pi \sqrt{1 - \scpr{\vec x, \vec y }^2}} \exp\left(\frac{\scpr{\vec x, \vec y} z}{1 - \scpr{\vec x, \vec y}^2} \right) K_0 \left(\frac{\abs{z}}{1 - \scpr{\vec x, \vec y}^2}\right),
\end{equation*}
where $K_0$ denotes the modified Bessel function of the second kind of order zero, which is given by (see \cite[eq.~(9.6.24)]{AbramowitzStegun64})
\begin{equation*}
	\Kzero(u)=\int_{0}^{\infty}e^{-u\cosh t}\,dt, \qquad u > 0.
\end{equation*} 
We have
\begin{align*}
\|f_{UV}\|_{p}^p = \int_{-\infty}^{\infty}f_{UV}(z)^p\,dz &= \frac{1}{\pi^p\,(1 - \scpr{\vec x, \vec y}^2)^{p/2}} \int_{-\infty}^\infty \exp\left(\frac{p \scpr{\vec x, \vec y} z}{1 - \scpr{\vec x, \vec y}^2} \right)\,
K_0 \left(\frac{\abs{z}}{1 - \scpr{\vec x, \vec y}^2}\right)^p dz \\
&= \frac{1}{\pi^p (1 - \scpr{\vec x, \vec y}^2)^{p/2 - 1}} \int_{-\infty}^{\infty} e^{p u \scpr{\vec x, \vec y}} K_0(\abs{u})^p \, du \\
&\le \frac{2}{\pi^p} (1 - \scpr{\vec x, \vec y}^2 )^{1-p/2} \int_{0}^{\infty} e^{ p u \abs{\scpr{\vec x, \vec y}}} K_0(u)^p \, du,
\end{align*}
where we used the change of variables $u = z / (1 - \scpr{\vec x, \vec y}^2)$. 
We bound this integral by splitting it on the two intervals $[0, 1]$ and $[1, \infty)$. 
For the integral on $[1, \infty)$, we have $\cosh t \ge 1 + \frac{1}{2}t^2$, so for all $u \ge 1$ we have the bound
\begin{equation*}
	K_0(u) = \int_0^\infty e^{-u\cosh t}\,dt
\le \int_0^\infty e^{-u -\frac{1}{2}u t^2}\,dt
=e^{-u}\sqrt{\frac{\pi}{2u}}.
\end{equation*}
Defining $\theta := 1 - \abs{\scpr{\vec x, \vec y}}$, we obtain
\begin{equation*}
	\int_{1}^{\infty} e^{pu \abs{\scpr{\vec x, \vec y}}} K_0(u)^p \, du \le 
	\int_{1}^{\infty} e^{p u \abs{\scpr{\vec x, \vec y}}} e^{-pu} \left(\frac{\pi}{2u}\right)^{p/2}  \,du \le \left(\frac{\pi}{2}\right)^{p/2} \int_{1}^{\infty} e^{-p \theta u} u^{-p/2}\,du.
\end{equation*}
With the change of variables $v = p\theta u$, we have
\begin{align*}
	\int_{1}^{\infty} e^{pu \abs{\scpr{\vec x, \vec y}}} K_0(u)^p \, du &= \left(\frac{\pi}{2}\right)^{p/2} (p \theta)^{p/2 - 1} \int_{p \theta}^\infty e^{-v} v^{-p/2} \, dv \\
	&\le \left(\frac{\pi}{2}\right)^{p/2} (p \theta)^{p/2 - 1} \int_{0}^\infty e^{-v} v^{-p/2} \, dv \\
	&= \left(\frac{\pi}{2}\right)^{p/2} (p \theta)^{p/2 - 1} \Gamma(1 - p/2).
\end{align*}
Note that for the last identity we used $p < 2$, so that $1 - p/2 > 0$ and $\Gamma(1-p/2)$ is well defined.   

For the integral on $[0, 1]$, using $e^{p u \abs{\scpr{\vec x, \vec y}}} \le e^{p \abs{\scpr{\vec x, \vec y}}}$ and \cref{lemma:k0-bound-near-zero} to bound $K_0(u) \le K_0(1) - \ln u$, we have
\begin{equation*}
	\int_0^1 e^{pu \abs{\scpr{\vec x, \vec y}}} K_0(u)^p du \le e^{p \abs{\scpr{\vec x, \vec y}}} \int_0^1 (K_0(1) - \ln u)^p du \le e^{p \abs{\scpr{\vec x, \vec y}}} \int_0^1 (1 - \ln u)^2 =  5 e^{p \abs{\scpr{\vec x, \vec y}}},
\end{equation*}
where we used the facts that $K_0(1) \approx 0.421 \le 1$ and $(1+x)^p \le (1+x)^2$ for all $x \ge 0$ and $p \in (1, 2)$.  
Putting together the two parts of the integral, we get the bound
\begin{equation*}
	\norm{f_{UV}}_p^p \le \frac{2}{\pi^p} (1 - \scpr{\vec x, \vec y}^2 )^{1-p/2} \left( 5 e^{p \abs{\scpr{\vec x, \vec y}}} + \left(\frac{\pi}{2}\right)^{p/2} (p \theta)^{p/2 - 1} \Gamma(1 - p/2) \right).
\end{equation*}
Using $1 - \scpr{\vec x, \vec y}^2 = \theta(2 - \theta)$ and some simple inequalities to obtain a bound independent of $\scpr{\vec x, \vec y}$, we get
\begin{equation*}
	\norm{f_{UV}}_p^p \le 10 \left(\frac{e}{\pi}\right)^p + \frac{2^{2-p}}{\pi^{p/2}} p^{p/2 - 1} \Gamma(1 - p/2).
\end{equation*}
Fixing $k = 3$ we have $p = 3/2$, and the bound becomes
\begin{equation*}
	\norm{f_{UV}}_{3/2}^{3/2} \le 10 \left(\frac{e}{\pi}\right)^{3/2} + \frac{\sqrt{2}}{\pi^{3/4}} \left(\frac{3}{2}\right)^{-1/4} \Gamma(1/4) \le 10.012.
\end{equation*}
Recalling that $Z_1 = \frac{1}{s} UV$, we have $f_{Z_1}(z) = s f_{UV}(sz)$ and hence $\norm{f_{Z_1}}_{p}^p = s^{p-1}\norm{f_{UV}}_{p}^p$. Combined with \cref{eqn:proof-sketch-ip--convolution-inequality}, we obtain
\begin{equation}
	\label{eqn:proof-sketch-ip--linftynorm-bound}
		\norm{f_{Z}}_\infty \le s \norm{f_{UV}}_{3/2}^3 \le 101 \, s.
\end{equation}
Finally, combining \cref{eqn:proof-sketch-ip--prob-infnorm-bound} with \cref{eqn:proof-sketch-ip--linftynorm-bound} we get
\begin{equation*}
	\prob (\abs{\scpr{\sketch \vec x, \sketch \vec y}} \le \delta) \le 202 \, s \delta,
\end{equation*}
thus concluding the proof for the case $\vec x \ne \vec y$.

Assume now $\vec x = \vec y$. We have 
\begin{equation*}
	Z = \scpr{\sketch \vec x, \sketch \vec x} = \frac{1}{s} \sum_{j = 1}^s \scpr{\vec \omega_j, \vec x}^2,
\end{equation*}
and since $\scpr{\vec \omega_j, \vec x}$ are i.i.d.~$\mathcal{N}(0,1)$ random variables, we have that $Z = \frac{1}{s} W$, where $W$ is a $\chi^2$ random variable with $s$ degrees of freedom. Therefore
\begin{equation*}
	\prob (\abs{\scpr{\sketch \vec x, \sketch \vec x}} \le \delta) = \prob (W \le s \delta) \le (\delta e^{1 - \delta})^{s/2},
\end{equation*}
where the last inequality is a Chernoff bound on the left tail of $W$, see for instance the proof of \cite[Lemma~3.2]{BRS24}. 
\end{proof}

\begin{remark}
	\label{rem:proof-comments}
	In the proof of \cref{prop:sketched-inner-product--gaussian}, we used $p = 3/2$ instead of $p = 2$ to obtain a bound that does not deteriorate as $\scpr{\vec x, \vec y} \to 1$. Our goal was mainly to show that $\prob( \scpr{\sketch \vec x, \sketch \vec y} \le \delta ) = \mathcal{O}(\delta)$, so we did not strive to obtain the sharpest possible inequalities in the proof, and it is very likely that the constant factor in the bound and its dependence on $s$ can be improved.
\end{remark}

\begin{remark}
	\label{rem:epsilon-embedding-of-orthogonal-vectors}
	
	In particular, \cref{prop:sketched-inner-product--gaussian} shows that even if $\vec x$ and $\vec y$ are orthogonal, their sketches $\sketch \vec x$ and $\sketch \vec y$ are very unlikely to be close to orthogonal. At first glance, this might seem to contradict the $\varepsilon$-embedding property of $\sketch$, since the sketching matrix should approximately preserve inner products. However, there is no contradiction because inner products are only preserved in an absolute sense, rather than relative. Indeed, if $\scpr{\vec x, \vec y} = 0$, the $\varepsilon$-embedding property \cref{eqn:epsilon-embedding-property} only tells us that 
	\begin{equation*}
		\abs{\scpr{\sketch \vec x, \sketch \vec y}} \le \varepsilon \norm{\vec x} \norm{\vec y},
	\end{equation*}   
	which does not contradict \cref{prop:sketched-inner-product--gaussian} since the $\varepsilon$ used for subspace embeddings is usually relatively large, say $\varepsilon = 1/2$ or $\varepsilon = 1/\sqrt{2}$.
	
\end{remark}

\cref{prop:sketched-inner-product--gaussian} allows us to gain some understanding on the improved stability of the randomized two-sided Gram-Schmidt process with respect to the standard one. Indeed, the ill-conditioning of the bases $\mat Q$ and $\mat P$ mainly stems from the fact that the biorthogonal vectors $\vec q_i$ and $\vec p_i$ often have very large norms, or equivalently that the normalized vectors $\vec q_i/\norm{\vec q_i}$ and $\vec p_i / \norm{\vec p_i}$ have small inner product. However, \cref{prop:sketched-inner-product--gaussian} shows that, for a Gaussian sketching matrix $\sketch$, the sketched inner product of any two normalized vectors $\vec q$ and $\vec p$ only has probability $O(\delta)$ of being smaller than $\delta$ (this probability is even smaller if $\vec q = \vec p$). Hence, it is extremely unlikely that \cref{algorithm:two-sided-gram-schmidt-sketched} constructs sketch-biorthogonal vectors with very large norms, and the condition number of the sketch-biorthogonal bases $\mat Q$ and $\mat P$ is very unlikely to become too large.  

In \cref{fig:orth_vs_sketch_orth} we illustrate that the property that we proved for a Gaussian sketch in \cref{prop:sketched-inner-product--gaussian} also holds numerically for other commonly used sketching matrices. Specifically, for two unit random vectors $\vec x$ and $\vec y$ such that $\scpr{\vec x, \vec y}=0$, we plot $\scpr{\sketch \vec x, \sketch \vec y}$ for different sketching matrices $\sketch \in \R^{s \times n}$ and different values of $s$, showing both the average value and the minimum over $100$ tests. For all sketching matrices, we observe that it is highly unlikely to obtain a sketched inner product that is close to zero. In light of this experiment, we expect that results analogous to \cref{prop:sketched-inner-product--gaussian} could also be proved for other kinds of sketching matrices.
In \cref{sec:numerical-experiments}, we are going to display the correlation between the inner product of $\vec q_i / \norm{\vec q_i}$ and $\vec p_i / \norm{\vec p_i}$ and the growth of the condition number of $\mat Q_i$ and $\mat P_i$, which combined with the observations in this section provides a theoretical justification for the better stability of the randomized two-sided Gram-Schmidt process.

\begin{figure}[tb]
	\centering
    \includegraphics[width=0.55\textwidth]{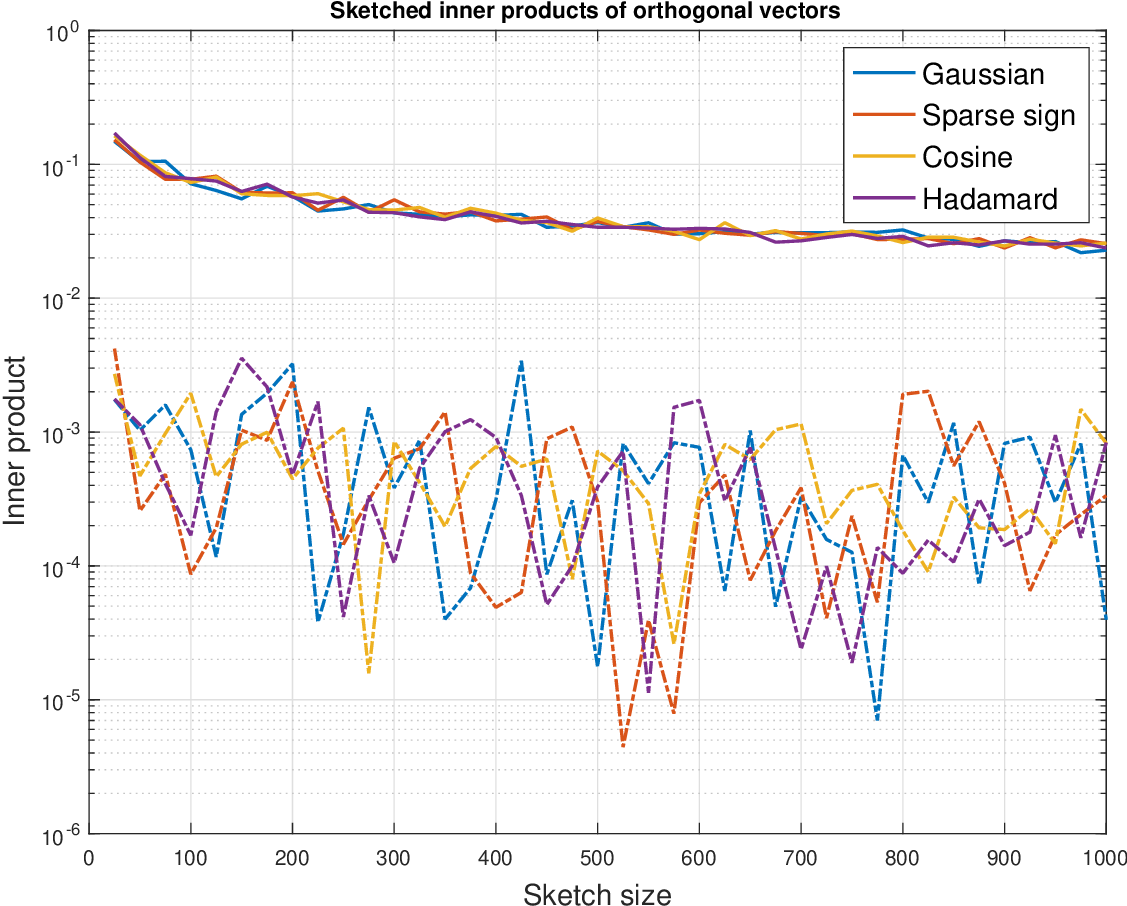}
	\caption{Sketched inner product $\scpr{\sketch \vec x, \sketch \vec y}$ of random orthogonal unit vectors $\vec x, \vec y \in \R^n$ of dimension $n = 10^4$, for different sketching sizes $s=25:25:10^3$ and different sketching matrices $\sketch \in \R^{s \times n}$. Solid: average over $100$ tests. Dash-dotted: minimum over $100$ tests.
    }
	\label{fig:orth_vs_sketch_orth}
\end{figure}

\section{Application to nonsymmetric Lanczos}
\label{sec:application-to-nonsymmetric-lanczos}

In this section, we show how \cref{algorithm:two-sided-gram-schmidt-sketched} can be incorporated within the nonsymmetric Lanczos algorithm and applied for the computation of eigenvalues and eigenvectors of a matrix $\mat A \in \R^{n \times n}$. We first recall some basic notions about Krylov subspaces \cite{Saad89, Arnoldi51}. 

\subsection{The nonsymmetric Lanczos algorithm}
\label{subsec:nonsym-lanczos}

Given a matrix $\mat A\in \R^{n\times n}$ and a vector $\vec b\in \R^n$, we denote with
\begin{equation*}
    \kryl_m(\mat A, \vec b) = {\rm span}\{ \vec b, \mat A\vec b, \mat A^2 \vec b, \ldots, \mat A^{m-1}\vec b\} = \{ p(\mat A) \vec b \; : \; \deg p \le m-1 \}, 
\end{equation*}
the Krylov space generated by $\mat A$ and $\vec b$.
The nonsymmetric Lanczos algorithm \cite{Lanczos50} is a generalization of the Lanczos algorithm (that can be seen a simplification of the Arnoldi method to symmetric matrices, see e.g. \cite{Arnoldi51}) to nonsymmetric matrices $\mat A\in\bb{R}^{n\times n}$, which uses a short-term recurrence to iteratively build the two Krylov subspaces
\begin{align*}
    \kryl_m(\mat A, \vec q_1) &= {\rm span}\{\vec q_1, \mat A \vec q_1, \ldots, \mat A^{m-1} \vec q_1\}, \\ 
    \kryl_m(\mat A^T, \vec p_1) &= {\rm span}\{\vec p_1, \mat A \vec p_1, \ldots, \mat A^{m-1} \vec p_1\}.
\end{align*}
The algorithm constructs two bases, denoted by $\mat Q_m = [\vec q_1, \vec q_2, \ldots, \vec q_m]$ and $\mat P_m = [\vec p_1, \vec p_2, \ldots, \vec p_m]$, respectively spanning $\kryl_m(\mat A, \vec q_1)$ and $\kryl_m(\mat A^T, \vec p_1)$, that are biorthogonal, i.e.~$\mat Q_m^T \mat P_m = I$. The bases satisfy the Arnoldi relations
\begin{equation}
	\label{eqn:nonsym-lanczos-relations}
	\begin{aligned}
		\mat A \mat Q_m &= \mat Q_m \mat H_m + \delta_{m+1} \vec q_{m+1} \vec e_m^T,\\
		\mat A^T \mat P_m &= \mat P_m \mat T_m + \beta_{m+1} \vec p_{m+1} \vec e_m^T,
	\end{aligned}
\end{equation}
	where $\mat H_m, \mat T_m\in\bb{R}^{m\times m}$ are upper Hessenberg matrices. From these relations we obtain 
\begin{equation}
	\label{eqn:nonsym-lanczos-projected}
	\begin{aligned}
		\mat P_m^T \mat A \mat Q_m &= \mat H_m, \\
		\mat Q_m^T \mat A^T \mat P_m &= \mat T_m,
	\end{aligned}
\end{equation}
and given that the left-hand sides in \cref{eqn:nonsym-lanczos-projected} are one the transpose of the other, we have $\mat H_m = \mat T_m^T$. This implies that $\mat H_m$ and $\mat T_m$ are both tridiagonal matrices, showing that the bases $\mat Q_m$ and $\mat P_m$ can be constructed with a short-term recurrence, see e.g. \cite{PTL85, Saad11}, similarly to the Lanczos algorithm for symmetric matrices. 

It is well known that the nonsymmetric Lanczos algorithm suffers from \textit{breakdown} or \textit{near breakdown} situations, which mainly occur when the inner product between the newly computed basis vectors $\vec q_i$ and $\vec p_i$ is (numerically) zero. 
In the literature, different ways of overcoming breakdowns have been proposed. The most accredited technique is the look-ahead Lanczos algorithm \cite{FGM93, Parlett92}, where we have $\mat Q_m^T \mat P_m = \mat D_m$, with a matrix $\mat D_m$ which is block diagonal instead of diagonal. This may be useful in avoiding some breakdowns, but even this approach is not sufficient in all cases.

\subsection{A randomized variant of nonsymmetric Lanczos}
\label{subsec:randomized-nonsym-lanczos}

We now introduce a randomized variant of the nonsymmetric Lanczos algorithm, which uses the randomized two-sided Gram-Schmidt process to construct sketch-biorthogonal bases of the two Krylov subspaces $\kryl_m(\mat A, \vec q_1)$ and $\kryl_m(\mat A^T, \vec p_1)$. 
Specifically, given a sketching matrix $\sketch \in \R^{s \times n}$, the randomized nonsymmetric Lanczos algorithm constructs two bases $\mat Q_m$ and $\mat P_m$, respectively spanning $\kryl_m(\mat A, \vec q_1)$ and $\kryl_m(\mat A^T, \vec p_1)$, such that $(\sketch \mat Q_m)^T \sketch \mat P_m = I$. 
The algorithm also constructs upper Hessenberg matrices $\mat H_m$ and $\mat T_m$ that still satisfy the relations \cref{eqn:nonsym-lanczos-relations}, and we also have the sketched relations  
\begin{equation}
	\label{eqn:sketched-nonsym-lanczos}
	\begin{aligned}
		\sketch \mat A \mat Q_m &= \sketch \mat Q_m \mat H_m + \delta_{m+1} \sketch \vec q_{m+1} \vec e_m^T,\\
		\sketch \mat A^T \mat P_m &= \sketch \mat P_m \mat T_m + \beta_{m+1} \sketch \vec p_{m+1} \vec e_m^T.
	\end{aligned}
\end{equation}
Since now the bases are sketch-biorthogonal, from \cref{eqn:sketched-nonsym-lanczos} we obtain
\begin{equation}
	\label{eqn:sketched-nonsym-lanczos-projection}
	\begin{aligned}
		(\sketch \mat P_m)^T \sketch \mat A \mat Q_m &= \mat H_m, \\
		(\sketch \mat Q_m)^T \sketch \mat A^T \mat P_m &= \mat T_m.
	\end{aligned}
\end{equation}
Note that \cref{eqn:sketched-nonsym-lanczos-projection} replaces \cref{eqn:nonsym-lanczos-projected}, and in this case the left-hand sides are no longer always the transpose of one another. Therefore the matrices $\mat H_m$ and $\mat T_m$ are in general not tridiagonal, showing that in this randomized variant the sketch-biorthogonal bases cannot be constructed with a short-term recurrence.

Even though the sketched inner products used in the randomized nonsymmetric Lanczos algorithm are significantly cheaper than the ones involving vectors of length $n$ used in its standard counterpart, the standard nonsymmetric Lanczos algorithm is still overall cheaper when its implementation exploits the short-term recurrence, since in its $m$-th iteration we only have to orthogonalize $\vec q_{m+1}$ against $\vec p_m$ and $\vec p_{m-1}$, instead of against the whole basis $\mat P_m$.
On the other hand, in finite precision arithmetic the short-term recurrence usually suffers from a loss of biorthogonality, and thus explicit re-biorthogonalization of the full bases is often required in applications \cite{MorganNicely11, JaimoukhaKasenally95}. This is especially true in applications in which the Lanczos algorithm is used for computing eigenvalues, where the loss of orthogonality causes the appearance of so called ``ghost eigenvalues'' even in the symmetric case \cite{ParlettScott79,CullumWilloughby02}, and hence it is crucial to fully biorthogonalize the bases explicitly.
In such a setting, the standard nonsymmetric Lanczos algorithm loses the computational advantage given by the short-term recurrence, and the randomized variant introduced in this section becomes more competitive. We can also expect that the randomized variant will construct bases that are often better conditioned compared to the standard algorithm, as a consequence of the analysis in \cref{sec:theoretical-analysis}.

The following result establishes an optimality property of the characteristic polynomial of the Hessenberg matrix $\mat H_m$ constructed by the randomized nonsymmetric Lanczos algorithm, which draws a connection with the original nonsymmetric Lanczos algorithm.

\begin{proposition}
	\label{prop:charpoly-optimality-sketched-nonsym-lanczos}
	Let $\mat H_m$ be as defined in \cref{eqn:sketched-nonsym-lanczos}, and let $p_{\mat H_m}$ be its characteristic polynomial. Denoting by $\Pi_m$ the set of monic polynomials of degree less than or equal to $m$, we have 
	\begin{equation*}
		p_{\mat H_m} = \argmin_{\vec q \in \Pi_m} \, \norm{(\sketch \mat P_m)^T \sketch q(\mat A) \vec b}.
	\end{equation*}
\end{proposition}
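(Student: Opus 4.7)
The plan is to reduce the minimization over $\Pi_m$ to a Cayley--Hamilton statement about $\mat H_m$, using the Arnoldi-type relation in \cref{eqn:nonsym-lanczos-relations} together with the sketch-biorthogonality $(\sketch \mat Q_{m+1})^T \sketch \mat P_{m+1} = I$ enforced by \cref{algorithm:two-sided-gram-schmidt-sketched}. Since $\range(\mat Q_m) = \kryl_m(\mat A, \vec b)$, the starting vector $\vec b$ is a scalar multiple of $\vec q_1$, and this scalar pulls out of the norm and does not affect the argmin, so I will work with $\vec q_1$ in place of $\vec b$ throughout.

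The first step is to establish, by induction on $k$, that $\mat A^k \vec q_1 = \mat Q_m \mat H_m^k \vec e_1$ for every $k \le m-1$, and that
\begin{equation*}
    \mat A^m \vec q_1 = \mat Q_m \mat H_m^m \vec e_1 + \gamma_m \vec q_{m+1},
\end{equation*}
where $\gamma_m = \delta_{m+1} \prod_{i=1}^{m-1} (\mat H_m)_{i+1,i}$ depends only on $\mat H_m$ and $\delta_{m+1}$. The mechanism is that iterating $\mat A \mat Q_m = \mat Q_m \mat H_m + \delta_{m+1} \vec q_{m+1} \vec e_m^T$ introduces residual contributions proportional to $\vec e_m^T \mat H_m^{k-1} \vec e_1 = (\mat H_m^{k-1})_{m,1}$, which vanishes for $k < m$ because $\mat H_m$ is upper Hessenberg. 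By linearity, for any monic polynomial $q \in \Pi_m$ of degree $d \le m$ one obtains
\begin{equation*}
    q(\mat A) \vec q_1 = \mat Q_m \, q(\mat H_m) \vec e_1 + c \, \gamma_m \vec q_{m+1},
\end{equation*}
where $c = 1$ if $d = m$ and $c = 0$ if $d < m$; in either case the residual direction is $\vec q_{m+1}$.

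Next I apply $(\sketch \mat P_m)^T \sketch$ to this identity. The sketch-biorthogonality $(\sketch \mat Q_{m+1})^T \sketch \mat P_{m+1} = I$ simultaneously yields $(\sketch \mat P_m)^T \sketch \mat Q_m = I$ and $(\sketch \mat P_m)^T \sketch \vec q_{m+1} = \vec 0$, so the residual term is annihilated and for every $q \in \Pi_m$ we are left with
\begin{equation*}
    (\sketch \mat P_m)^T \sketch q(\mat A) \vec q_1 = q(\mat H_m) \vec e_1.
\end{equation*}
Thus the original minimization is equivalent to minimizing $\norm{q(\mat H_m) \vec e_1}$ over monic polynomials of degree at most $m$. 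By Cayley--Hamilton, $p_{\mat H_m}(\mat H_m) = 0$, so the minimum is zero and is attained at $q = p_{\mat H_m}$; uniqueness of the argmin follows whenever $\mat H_m$ is unreduced (i.e., no breakdown has occurred), so that $\vec e_1, \mat H_m \vec e_1, \dots, \mat H_m^{m-1} \vec e_1$ are linearly independent and no monic polynomial of degree strictly less than $m$ can annihilate $\vec e_1$.

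The only genuinely new step compared to the deterministic case is the identity $(\sketch \mat P_m)^T \sketch \vec q_{m+1} = \vec 0$, which is the sketched analogue of the Petrov--Galerkin orthogonality of the Arnoldi residual against the left basis; once this is in hand, the rest is routine Krylov bookkeeping.
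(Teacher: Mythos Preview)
Your proof is correct and takes a genuinely different, more elementary route than the paper. The paper works abstractly with the sketched oblique projector $\skproj{Q} = \mat Q_m (\sketch \mat P_m)^T \sketch$: it applies Cayley--Hamilton to $\skproj{Q} \mat A \skproj{Q}$, invokes \cite[Proposition~6.4]{Saad11} to pass from $p_{\mat H_m}(\skproj{Q} \mat A \skproj{Q}) \vec b = \vec 0$ to $\skproj{Q} p_{\mat H_m}(\mat A) \vec b = \vec 0$, and then appeals to the variational characterization of the sketched oblique projector in \cref{prop:sketched-oblique-projector-properties}(iii) to conclude. You instead iterate the Arnoldi relation directly to obtain the explicit identity $(\sketch \mat P_m)^T \sketch\, q(\mat A) \vec q_1 = q(\mat H_m) \vec e_1$ for every $q \in \Pi_m$, reducing the problem to a minimization in $\R^m$ that is trivially solved by Cayley--Hamilton on $\mat H_m$ itself. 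Your approach is self-contained (no external reference to Saad, no need for the projector framework of \cref{sec:randomized-two-sided-gram-schmidt}), and it also cleanly handles monic polynomials of degree strictly less than $m$ and the uniqueness of the argmin via the unreduced Hessenberg structure, points the paper treats only implicitly. The paper's route, on the other hand, has the merit of making transparent how the result slots into the sketched oblique-projection framework and of directly paralleling the classical proof of \cite[Theorem~6.1]{Saad11}.
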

\begin{proof}
	The proof follows the same approach used in the proofs of \cite[Theorem~6.1]{Saad11} and \cite[Theorem~5.2]{DeDamasGrigori24}.
	Let us denote by $\skproj{Q} = \mat Q_m (\sketch \mat P_m)^T \sketch$ the sketched oblique projector onto $\kryl_m(\mat A, \vec b)$, acting sketch-orthogonally with respect to $\kryl_m(\mat A^T, \vec c)$. We have 
	\begin{equation*}
		\skproj{Q} \mat A \skproj{Q} = \mat Q_m (\sketch \mat P_m)^T \sketch \mat A \mat Q_m (\sketch \mat P_m)^T \sketch = \mat Q_m \mat H_m (\sketch \mat P_m)^T \sketch.
	\end{equation*}
	Let us denote by $p_{\mat M}$ the characteristic polynomial of a matrix $\mat M$. Given two matrices $\mat M \in \R^{n \times m}$ and $\mat N \in \R^{m \times n}$ with $m \le n$, it is easy to see that $p_{\mat M \mat N}(z) = p_{\mat N \mat M}(z) z^{n - m}$. In particular, using $\mat M = \mat Q_m \mat H_m$ and $\mat N = (\sketch \mat P_m)^T \sketch$, we have $p_{\skproj{Q} \mat A \skproj{Q}}(z) = p_{\mat H_m} (z) z^{n-m}$.  
	By the Cayley-Hamilton theorem, we have $p_{\skproj{Q} \mat A \skproj{Q}} (\skproj{Q} \mat A \skproj{Q}) = \mat 0$. Assuming that $\mat H_m$ is full rank, we have that $\skproj{Q} \mat A \skproj{Q}$ is invertible on $\kryl_m(\mat A, \vec b)$, and hence for any $\vec x \in \kryl_m(\mat A, \vec b)$ we have 
	\begin{equation*}
		p_{\mat H_m}(\skproj{Q} \mat A \skproj{Q}) \vec x = \vec 0.
	\end{equation*}
	By \cite[Proposition~6.4]{Saad11}, since $\deg p_{\mat H_m} \le m$ and $\skproj{Q}$ is a projector onto $\kryl_m(\mat A, \vec b)$, we have   
	\begin{equation*}
		\skproj{Q} p_{\mat H_m}(\mat A) \vec b = p_{\mat H_m}(\skproj{Q} \mat A \skproj{Q}) \vec b = \vec 0.
	\end{equation*}
	This implies that 
	\begin{equation*}
		\scpr{\sketch \skproj{Q} p_{\mat H_m}(\mat A) \vec b, \sketch \vec y} = 0 \qquad \forall \, \vec y \in \R^n,
	\end{equation*}
	and by \cref{prop:sketched-oblique-projector-properties} 
	\begin{equation*}
		\scpr{\sketch p_{\mat H_m}(\mat A) \vec b, \sketch \skproj{P} \vec y} = 0,
	\end{equation*}
	where $\skproj{P}$ denotes the sketched oblique projector onto $\kryl_m(\mat A^T, \vec c)$ acting sketch-orthogonally with respect to $\kryl_m(\mat A, \vec b)$. 
	Since $p_{\mat H_m}$ is a monic polynomial, we can write it as $p_{\mat H_m}(x) = x^m - s(x)$, where $\deg s \le m-1$. We have shown that $\mat A^m \vec b - s(\mat A) \vec b \perp_\sketch \kryl_m(\mat A^T, \vec c)$, and since $s(\mat A) \vec b \in \kryl_m(\mat A, \vec b)$, we conclude that $s(\mat A) \vec b = \skproj{Q} A^m \vec b$. Therefore, by \cref{prop:sketched-oblique-projector-properties} we have
	\begin{equation*}
		s(\mat A) \vec b = \argmin_{\vec y \in \kryl_m(\mat A, \vec b)} \norm{(\sketch \mat P_m)^T \sketch (\mat A^m \vec b - \vec y)} = \argmin_{q \,:\, \deg q \le m-1} \norm{(\sketch \mat P_m)^T \sketch (\mat A^m \vec b - q(\mat A) \vec b)},
	\end{equation*}       
	and this concludes the proof because any monic polynomial in $\Pi_m$ can be written as $\mat A^m - q(\mat A)$ for some polynomial $q$ with $\deg q \le m-1$.
\end{proof}

\begin{remark}
	\label{rem:charpoly-optimality-transpose}
	With the same proof as \cref{prop:charpoly-optimality-sketched-nonsym-lanczos}, it can be shown that the characteristic polynomial of $\mat T_m$ satisfies the optimality condition
	\begin{equation*}
		p_{\mat T_m} = \argmin_{q \in \Pi_m} \norm{(\sketch \mat Q_m)^T \sketch q(\mat A^T) \vec c}.
	\end{equation*} 
	Note that, in contrast with the classical nonsymmetric Lanczos algorithm, we have $\mat H_m \ne \mat T_m^T$ and hence the two characteristic polynomials $p_{\mat H_m}$ and $p_{\mat T_m}$ do not necessarily coincide. 
\end{remark}

\subsection{Approximation of eigenvalues with nonsymmetric Lanczos} 
\label{subsec:nonsym-lanczos-for-eigenvalues}

The nonsymmetric Lanczos method can be used to approximate eigenvalues and eigenvectors of a possibly nonsymmetric matrix $\mat A\in\bb{R}^{n\times n}$. After computing the biorthogonal bases $\mat Q_m$, $\mat P_m$ and the tridiagonal matrix $\mat H_m$, the eigenvalues of $A$ are approximated by solving a smaller eigenvalue problem with $\mat H_m$. More precisely, given an eigentriplet of $\mat H_m$, say $(\theta_i, \tilde{\vec x}_i, \tilde{\vec y}_i)$ we can construct the Ritz triplet of $\mat A$ as $(\theta_i, \vec x_i, \vec y_i) = (\theta_i, \mat Q_m \tilde{\vec x}_i, \mat P_m \tilde{\vec y}_i)$. 
The perturbation theory for the nonsymmetric case is more complicated than in the symmetric case. In \cite{KPJ82, Saad11}, theoretical bounds on the error between the eigenvalues of $\mat H_m$ and those of $\mat A$ are given, showing that the approximate eigentriplets computed by the nonsymmetric Lanczos method solve a nearby eigenvalue problem.

We propose to use the randomized nonsymmetric Lanczos algorithm to compute the two sketch-biorthogonal bases $\mat Q_m$ and $\mat P_m$ satisfying \cref{eqn:sketched-nonsym-lanczos} and \cref{eqn:sketched-nonsym-lanczos-projection}, and compute the eigenvalues and eigenvectors of the associated upper Hessenberg matrices $\mat H_m$ and $\mat T_m$. As we already discussed in \cref{subsec:randomized-nonsym-lanczos}, to avoid the occurrence of ghost eigenvalues it is beneficial to fully biorthogonalize the bases even when a short-term recurrence is available, so the randomized algorithm is more efficient than the standard nonsymmetric Lanczos method, even though the short-term recurrence is lost in the randomized case. 
However, the main drawback of the randomized approach is that the two matrices $\mat H_m$ and $\mat T_m$ in \cref{eqn:sketched-nonsym-lanczos-projection} are in general not similar to each other, and hence their eigenvalues are not necessarily the same. Therefore, in contrast with the standard nonsymmetric Lanczos algorithm, we do not directly find approximate eigentriplets of $\mat A$. Nevertheless, we are going to see in \cref{subsec:nonsymm_numerical_example} that the residuals of the approximate eigenvalues and eigenvectors computed by the randomized nonsymmetric Lanczos method are comparable with those of the deterministic method.

\section{Numerical experiments}
\label{sec:numerical-experiments}

In this section we present some experiments that illustrate the numerical behavior of the randomized two-sided Gram-Schmidt algorithm and its application to the nonsymmetric Lanczos method for computing eigenvalues. All experiments have been performed in MATLAB 2025a, on a Dell laptop with CPU 12th Gen Intel(R) Core(TM) i7-12700H 2.30GHz and 16 GB RAM.

\subsection{Condition number growth and loss of biorthogonality}
\label{subsec:experiment-condition-number-growth}

We begin with a comparison of different implementations of the randomized two-sided Gram-Schmidt algorithm (\cref{algorithm:two-sided-gram-schmidt-sketched}) and its deterministic counterpart (\cref{algorithm:two-sided-gram-schmidt}), focusing on the growth of the condition number of the computed bases $\mat Q_m$ and $\mat P_m$, and the numerical loss of biorthogonality $\norm{I - \mat P_m^T \mat Q_m}_F$ or sketch-biorthogonality $\norm{I - (\sketch \mat P_m)^T \sketch \mat Q_m}_F$. We compare modified Gram-Schmidt (MGS and MGS2), classical Gram-Schmidt (CGS, CGS2 and CGS3), and classical Gram-Schmidt with explicit oblique projection (CGS\_O and CGS\_O2) against their randomized versions, for which we use the same abbreviations with an intial r, which stands for ``randomized''.

\subsubsection{Ill-conditioned matrices}
\label{subsubsec:experiment-ill-conditioned}
We first consider two highly ill-conditioned bases $\mat X$ and $\mat Y \in \R^{n \times m}$, with dimensions $n=10^4$ and $m = 200$, built by discretizing the functions
\[
f(x,y) = \frac{\sin(x+y)}{\cos(100\cdot (y-x)) + 1.1}, \hspace{1cm}
g(x,y) = \frac{\cos(x+y)}{\sin(200\cdot (y-x)) + 1.2},
\]
on a uniform grid on $[0, 1] \times [0, 1]$, i.e.~we set $\mat X_{i,j} = f(\frac{i-1}{n-1},\frac{j-1}{m-1})$ and $\mat Y_{i,j} = g(\frac{i-1}{n-1},\frac{j-1}{m-1})$. The condition number of these matrices grows rapidly with $m$: for $m = 200$, we have $\kappa(\mat X) = 3.88 \times 10^{15}$ and $\kappa(\mat Y) = 4.13\times 10^{15}$. These synthetic test matrices are adapted from \cite{BalabanovGrigori22}, with some minor modifications.

\begin{table}
	\caption{Performance of two-sided Gram-Schmidt and randomized two-sided Gram-Schmidt on ill-conditioned $\mat X$ and $\mat Y \in \R^{n \times m}$, with $n = 10^4$ and $m = 200$. 
	\label{tab:exp1}}
\centering
\begin{tabular}{l|cccccc}
\toprule
 & time (s) & ${\rm cond}(\mat Q)$ & ${\rm cond}(\mat P)$ & $\text{err}(\mat X)$ & $\text{err}(\mat Y)$ & (sketch-)biorth\\
\midrule
MGS & 0.606 & 2.108e+10 & 2.165e+11 & 2.430e-12 & 7.861e-11 & 1.147e+02 \\

MGS2 & 1.090 & 8.013e+09 & 5.737e+10 & 2.846e-12 & 9.716e-11 & 1.518e-06 \\

CGS & 0.229 & 1.842e+17 & 8.317e+16 & 2.425e-12 & 7.520e-10 & 6.513e+02 \\

CGS2 & 0.394 & 1.960e+17 & 6.260e+17 & 1.398e-09 & 7.498e-08 & 4.888e+02 \\

CGS3 & 0.554 & 1.150e+11 & 1.648e+12 & 2.575e-12 & 8.133e-11 & 7.391e-06 \\

CGS\_O & 0.355 & 8.332e+17 & 5.497e+17 & 2.081e-12 & 2.476e-12 & 1.784e+03 \\

CGS\_O2 & 0.515 & 4.114e+09 & 5.908e+10 & 2.458e-12 & 8.139e-11 & 5.699e-03 \\

\midrule

rMGS & 0.397 & 2.714e+05 & 9.587e+05 & 4.882e-12 & 3.234e-11 & 1.077e+02 \\

rMGS2 & 0.659 & 1.333e+05 & 5.699e+05 & 5.999e-12 & 3.980e-11 & 2.527e-11 \\

rCGS & 0.161 & 1.290e+17 & 2.044e+17 & 7.692e-12 & 2.382e-10 & 6.340e+02 \\

rCGS2 & 0.278 & 8.429e+16 & 1.246e+17 & 8.147e-10 & 1.318e-08 & 1.324e+02 \\

rCGS3 & 0.419 & 3.107e+05 & 9.504e+05 & 5.215e-12 & 3.308e-11 & 3.050e-11 \\

rCGS\_O & 0.197 & 7.906e+16 & 4.294e+16 & 7.050e-13 & 6.862e-12 & 6.839e+01 \\

rCGS\_O2 & 0.320 & 1.639e+05 & 7.254e+05 & 4.943e-12 & 3.259e-11 & 9.432e-10 \\
\bottomrule
    \end{tabular}

\end{table}

In \cref{tab:exp1} we report the results both for all the implementations of the deterministic and randomized two-sided Gram-Schmidt processes. In \cref{tab:exp1}, $\text{err}(\mat X) = \norm{\mat X - \mat Q \mat T}_F$ and $\text{err}(\mat Y) = \norm{\mat Y - \mat P \mat S}_F$ denote the errors in the biorthogonal decompositions computed numerically, while the (sketch-)biorth column contains the numerical loss of biorthogonality for the deterministic algorithms, and the loss of sketch-biorthogonality for the randomized ones. We observe that, when a deterministic algorithm is unable to achieve a small biorthogonalization error (see for instance MGS, CGS and CGS2), then the same is usually true for the corresponding randomized version. On the other hand, the advantages of the randomized version are evident for MGS2, CGS3 and CGS\_O2: not only the biorthogonalization error decreases of several orders of magnitude, but the condition number of both bases $\mat Q$ and $\mat P$ are also significantly lower. Moreover, in all cases the randomized version requires a significant lower computational time compared to the deterministic ones. In this experiment with highly ill-conditioned matrices, the method that overall seems to perform the best is rCGS\_O2. Although it achieves similar accuracy to rMGS2 and rCGS3, its shorter runtime makes it the preferred algorithm. 

It is also interesting to analyze the condition number growth and the increasing loss of biorthogonality for the different algorithms as iterations progress. In \cref{fig:biorth_loss} we plot the loss of biorthogonality for all methods during the first $200$ iterations using the same example. 
In almost all the cases, the randomized algorithms obtain lower (sketch-)biorthogonalization error than their deterministic counterpart. 

\begin{figure}[tb]
	\centering
    \begin{subfigure}{0.325\textwidth}
    \includegraphics[width=\textwidth]{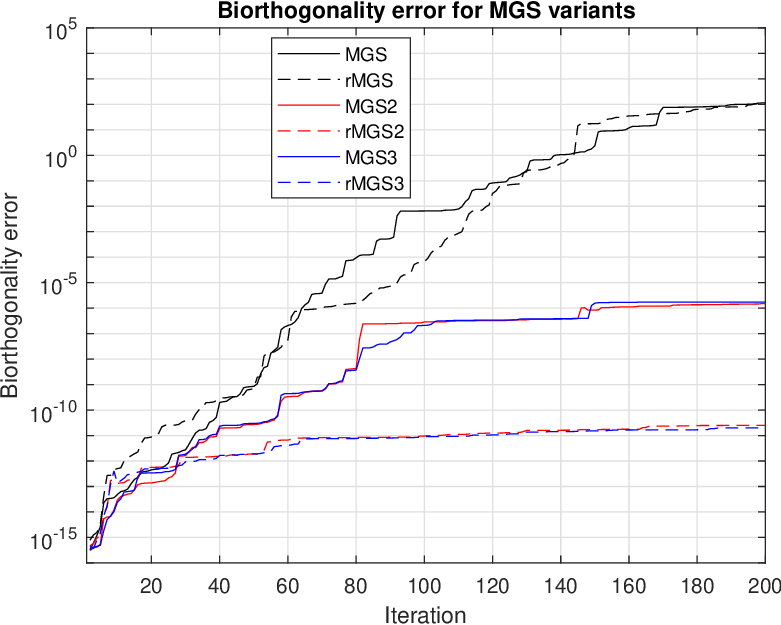}
    \caption{MGS variants}
    \label{fig:biorth_err_mgs_var}
    \end{subfigure}
    \begin{subfigure}{0.325\textwidth}
    \includegraphics[width=\textwidth]{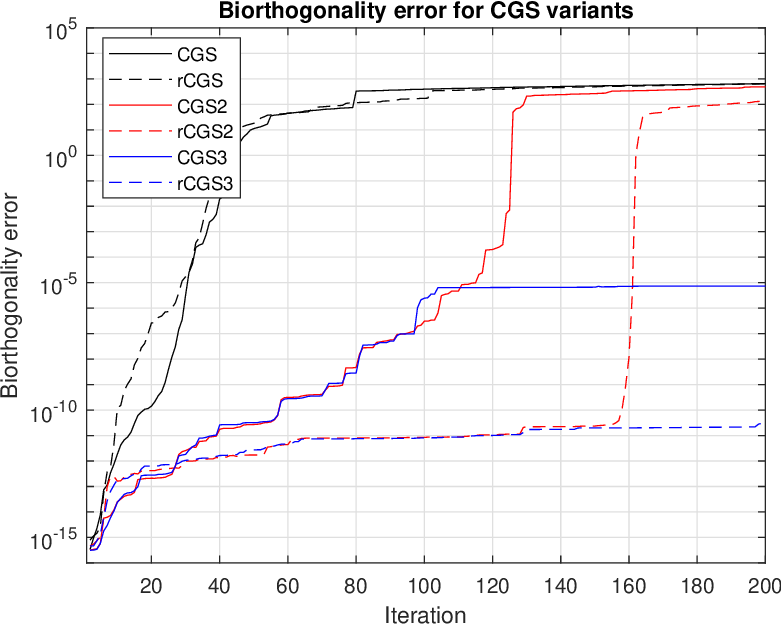}
    \caption{CGS variants}
    \label{fig:biorth_err_cgs_var}
    \end{subfigure}
    \begin{subfigure}{0.325\textwidth}
    \includegraphics[width=\textwidth]{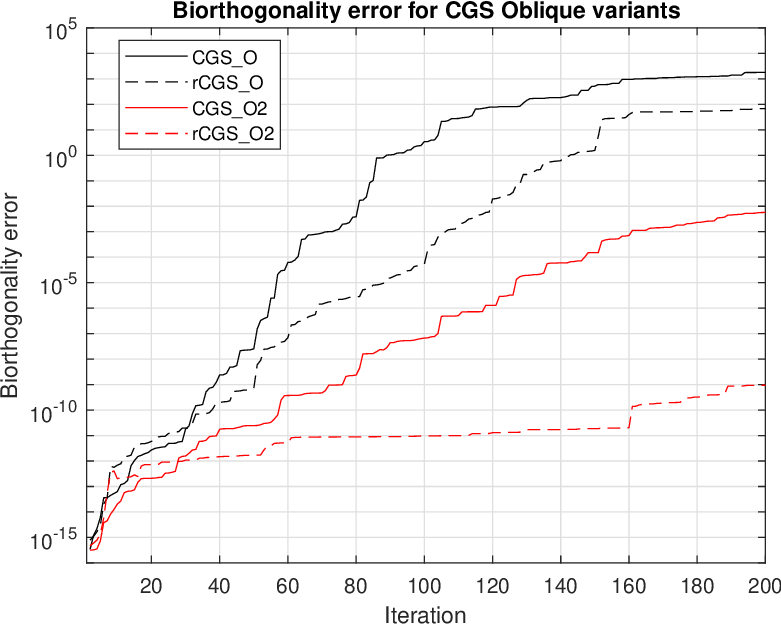}
    \caption{CGS\_O variants}
    \label{fig:biorth_err_cgs_o_var}
    \end{subfigure}
	\caption{Biorthogonality errors computed at each iteration, for ill conditioned $\mat X$ and $\mat Y$, as in \cref{tab:exp1}. (a): MGS and its variants. (b): CGS and its variants. (c) CGS\_O and its variants.
    }
	\label{fig:biorth_loss}
\end{figure}

To deepen our understanding of the loss of biorthogonality and the condition number growth, we now focus on two implementations, namely CGS\_O2 and rCGS\_O2. 
Recall that the cosine of the angle between two vectors $\vec x,\vec y\in\bb{R}^{n}$ is defined as
 \begin{equation*}
     \cos\angle(\vec x, \vec y) = \frac{\scpr{\vec x, \vec y}}{\norm{\vec x}\norm{\vec y}}.
 \end{equation*}
 In \cref{fig:biorth_vs_cond_cgs_o} we compare the condition number growth and the loss of biorthogonality for the deterministic method CGS\_O2 with the reciprocal of $\cos \angle(\vec q_i, \vec p_i)$, for $i = 1, \dots, 200$; similarly, for the randomized method rCGS\_O2 we compare against the reciprocal of $\cos \angle(\sketch \vec q_i, \sketch \vec p_i)$. Note that when $\cos \angle(\vec q_i, \vec p_i) \approx 0$, then $\vec q_i$ and $\vec p_i$ are almost orthogonal and we are in a near-breakdown situation for the two-sided Gram-Schmidt process; if this happens, we expect to have an increase in the condition number of the bases $\mat Q_i$ and $\mat P_i$ and in the loss of biorthogonality. Because of the discussion after \cref{prop:sketched-inner-product--gaussian}, we know that in the randomized setting the sketched vectors $\sketch \vec q_i$ and $\sketch \vec p_i$ are significantly less likely to be almost orthogonal to each other, so we expect to have a less severe growth of the condition number and of the loss of biorthogonality compared to the deterministic algorithms. Indeed, this expected behavior is observed in \cref{fig:biorth_vs_cond_cgs_o}: in particular, we see that the condition number and loss of biorthogonality have the most evident increases in correspondence of the highest spikes of $1/\cos \angle(\vec q_i, \vec p_i)$ (or $1 / \cos \angle(\sketch \vec q_i, \sketch \vec p_i)$ for the randomized algorithm), and that these spikes are several orders of magnitude smaller in the randomized setting.

\begin{figure}[tb]
	\centering
    \begin{subfigure}[b]{0.45\textwidth}
    \centering
    \includegraphics[width=\textwidth]{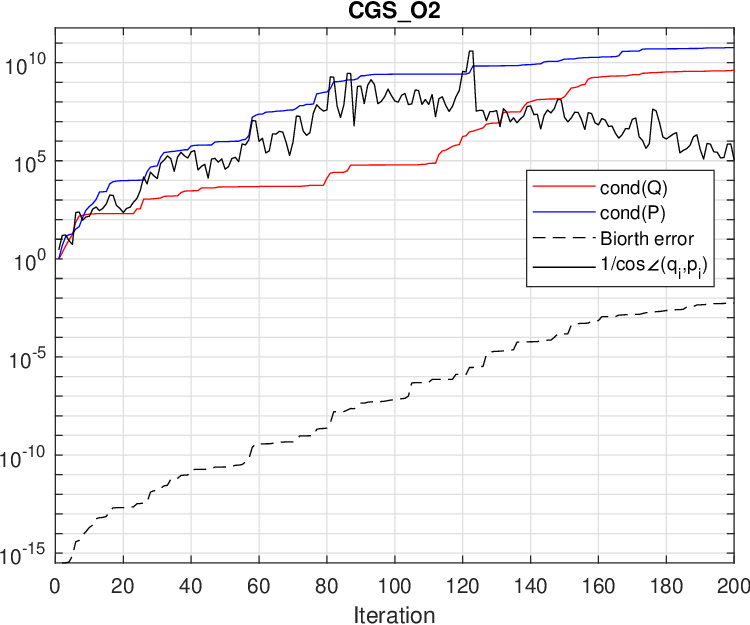}
    \caption{CGS\_O2}
    \label{fig:biorth_vs_cond_cgs_o2_ill}
    \end{subfigure}
    \hfill
    \begin{subfigure}[b]{0.45\textwidth}
    \centering
    \includegraphics[width=\textwidth]{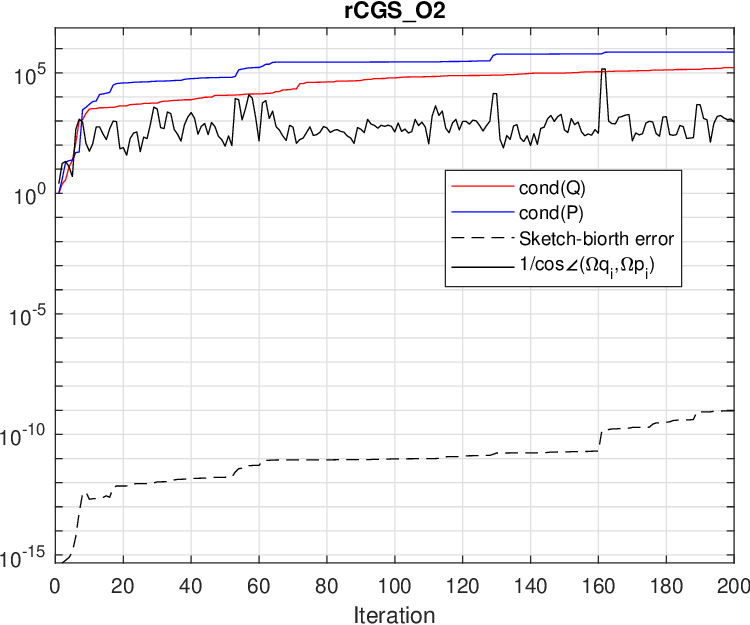}
    \caption{rCGS\_O2}
    \label{fig:biorth_vs_cond_rcgs_O2_ill}
    \end{subfigure}
    \caption{Comparison of the condition numbers of $\mat Q_i$ and $\mat P_i$, the loss of (sketch-)biorthogonality and the angle between the (sketches of) $\vec q_i$ and $\vec p_i$, for $\mat X$ and $\mat Y$ as in \cref{tab:exp1}. (a): CGS\_O2. (b): rCGS\_O2.}
	\label{fig:biorth_vs_cond_cgs_o}
\end{figure}

\subsubsection{Well-conditioned matrices}
\label{subsubsec:experiment-well-conditioned}

We now perform the same experiment as in \cref{subsubsec:experiment-ill-conditioned} using matrices with a significantly smaller condition number and a larger number of columns. We construct two random Gaussian matrices $\mat X,\mat Y\in \R^{n\times m}$ with i.i.d.~$\mathcal{N}(0,1)$ entries, with $n=10^4$ and $m=500$. In our experiment, the condition number of the two generated matrices are $\kappa(\mat X) = 1.572$ and $\kappa(\mat Y) = 1.564$. It is well-known that, even when starting with very well-conditioned matrices, the two-sided Gram-Schmidt process can still be unstable and construct highly ill-conditioned bases $\mat Q$ and $\mat P$.

\begin{table}
	\caption{Performance of two-sided Gram-Schmidt and randomized two-sided Gram-Schmidt on well-conditioned $\mat X$ and $\mat Y \in \R^{n \times m}$, with $n = 10^4$ and $m = 500$. 
	\label{tab:exp2}}
\centering
\begin{tabular}{l|cccccc}
\toprule
 & time (s) & ${\rm cond}(\mat Q)$ & ${\rm cond}(\mat P)$ & err $\mat X$ & err $\mat Y$ & (sketch-)biorth\\
\midrule
MGS & 3.410 & 2.197e+06 & 1.510e+06 & 3.676e-09 & 3.655e-09 & 4.631e-07 \\

MGS2 & 6.538 & 2.197e+06 & 1.510e+06 & 5.368e-09 & 3.886e-09 & 3.056e-10 \\

CGS & 1.421 & 9.417e+08 & 7.501e+08 & 6.986e-06 & 6.678e-06 & 2.916e+03 \\

CGS2 & 2.931 & 2.197e+06 & 1.510e+06 & 2.658e-09 & 2.625e-09 & 2.942e-10 \\

CGS3 & 3.771 & 2.197e+06 & 1.510e+06 & 2.684e-09 & 2.693e-09 & 3.239e-10 \\

CGS\_O & 3.236 & 2.197e+06 & 1.510e+06 & 2.606e-09 & 2.577e-09 & 1.341e-07 \\

CGS\_O2 & 5.122 & 2.197e+06 & 1.510e+06 & 2.650e-09 & 2.721e-09 & 3.512e-10 \\

\midrule

rMGS & 2.455 & 4.702e+05 & 3.565e+05 & 1.531e-09 & 1.509e-09 & 5.601e-08 \\

rMGS2 & 4.666 & 4.702e+05 & 3.565e+05 & 1.680e-09 & 1.699e-09 & 7.899e-11 \\

rCGS & 1.142 & 2.779e+06 & 2.639e+06 & 1.547e-08 & 1.428e-08 & 2.213e+03 \\

rCGS2 & 1.991 & 4.702e+05 & 3.565e+05 & 1.132e-09 & 1.123e-09 & 7.296e-11 \\

rCGS3 & 3.620 & 4.702e+05 & 3.565e+05 & 1.133e-09 & 1.122e-09 & 7.234e-11 \\

rCGS\_O & 2.654 & 4.702e+05 & 3.565e+05 & 7.957e-10 & 7.663e-10 & 1.926e-08 \\

rCGS\_O2 & 3.344 & 4.702e+05 & 3.565e+05 & 1.115e-09 & 1.108e-09 & 7.418e-11 \\
\bottomrule
    \end{tabular}

\end{table}

In \cref{tab:exp2} we report the results of all the variants of the deterministic and randomized two-sided Gram-Schmidt process.
Similarly to the experiment in \cref{subsec:experiment-condition-number-growth}, the randomized algorithms perform better than the deterministic ones. Even if the reduction in the basis condition number is less remarkable compared to \cref{tab:exp1}, the randomized algorithms still have smaller loss of biorthogonality and require significantly less computational time. For this example, rCGS2 is the fastest method among the ones that reach a sketch-biorthogonalization error around $10^{-10}$. 

In \cref{fig:biorth_vs_cond_cgs_o_exp2} we focus on CGS\_O2 and rCGS\_O2, and we compare the loss of biorthogonality with the condition number growth and the reciprocal of $\cos\angle(\vec q_i, \vec p_i)$, or the reciprocal of $\cos \angle(\sketch \vec q_i, \sketch \vec p_i)$ for the randomized algorithms. In this case, the different between the standard and the randomized approach is less significant than in \cref{subsubsec:experiment-ill-conditioned}, because even with the deterministic algorithm it is quite unlikely for $\vec q_i$ and $\vec p_i$ to be almost orthogonal. Nevertheless, we can still observe that the condition numbers of $\mat Q_i$ and $\mat P_i$ have particularly large growth spikes at the iterations in which $1/\cos\angle(\vec q_i, \vec p_i)$ or $1/\cos\angle(\sketch \vec q_i, \sketch \vec p_i)$ is large. We also notice that for the deterministic algorithm $1/\cos \angle(\vec q_i, \vec p_i)$ has slightly larger spikes, sometimes almost of the order $10^{7}$, than $1/\cos \angle(\sketch \vec q_i, \sketch \vec p_i)$ for the randomized algorithm, which is order $10^5$, with a ratio between one or two orders of magnitude.

\begin{figure}[htb]
    \centering
    \begin{subfigure}[b]{0.45\textwidth}
        \centering
        \includegraphics[width=\textwidth]{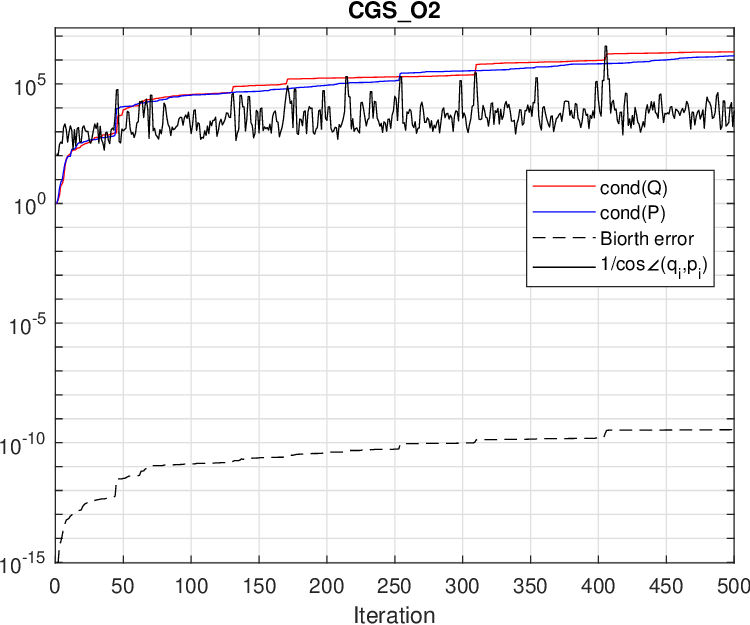}
        \caption{CGS\_O2}
        \label{fig:biorth_vs_cond_cgs2}
    \end{subfigure}
    \hfill
    \begin{subfigure}[b]{0.45\textwidth}
        \centering
        \includegraphics[width=\textwidth]{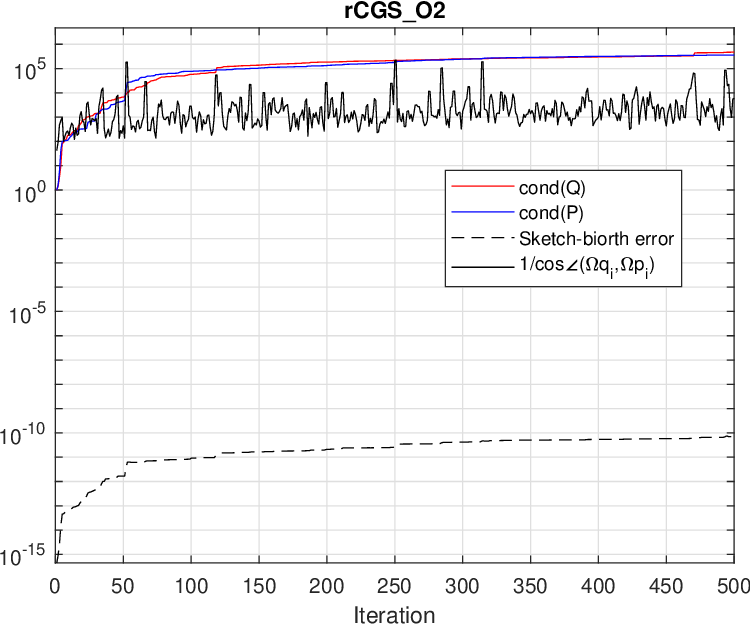}
        \caption{rCGS\_O2}
        \label{fig:biorth_vs_cond_rcgs2}
    \end{subfigure}
    \caption{Comparison of the condition numbers of $\mat Q_i$ and $\mat P_i$, the loss of (sketch-)biorthogonality and the angle between the (sketches of) $\vec q_i$ and $\vec p_i$, for $\mat X$ and $\mat Y$ as in \cref{tab:exp2}. (a) CGS\_O2. (b) rCGS\_O2.}
    \label{fig:biorth_vs_cond_cgs_o_exp2}
\end{figure}

\subsection{Application to nonsymmetric Lanczos for computing eigenvalues}\label{subsec:nonsymm_numerical_example}

In this section we apply the randomized two-sided Gram-Schmidt process as a sketch-biorthogonalization procedure within the randomized nonsymmetric Lanczos algorithm, introduced in \cref{subsec:randomized-nonsym-lanczos}. We use this algorithm to approximate a few leading eigenvalues of a matrix $\mat A \in \R^{n \times n}$, comparing it with the classical nonsymmetric Lanczos method with full biorthogonalization.  We construct a nonsymmetric matrix $\mat A\in\bb{R}^{n\times n}$ of size $n = 1000$ with prescribed eigenvalue distribution and condition number; more precisely, we take $\mat A = \mat X^{-1} \mat D \mat X$, where $\cond(\mat X) = 10^2$ and $\mat D$ is a diagonal matrix with eigenvalues $\lambda_1, \dots, \lambda_n$ given by
\begin{alignat*}{2}
		\lambda_i &= (0.95)^{i} & \qquad \text{for }i = 1, \dots, 15, \\
		\lambda_i &= (0.99)^{i-15} \lambda_{15} & \qquad \text{for } i = 16, \dots, n.
\end{alignat*} 
We run both methods for $m = 100$ iterations to compute the $10$ largest eigenvalues of $\mat A$.
Note that for this example, the nonsymmetric Lanczos algorithm implemented with a short-term recurrence found multiple copies of the leading eigenvalues and was unable to converge for the remaining ones, so full biorthogonalization is needed to ensure convergence.

We use MGS2 to biorthogonalize the bases in the deterministic Lanczos method, and CGS\_O2 for the randomized one, since they were the methods with the best compromise between performance and stability in \cref{subsec:experiment-condition-number-growth}. The convergence of the residuals of the right eigenvectors for the two methods are shown in \cref{fig:non_symm_lanczos}. We can see that the randomized Lanczos method exhibits almost the same convergence as the deterministic one, while employing a significantly cheaper sketched biorthogonalization procedure.

\begin{figure}[htb]
\centering
\begin{subfigure}[b]{0.45\textwidth}
    \centering
    \includegraphics[width=\textwidth]{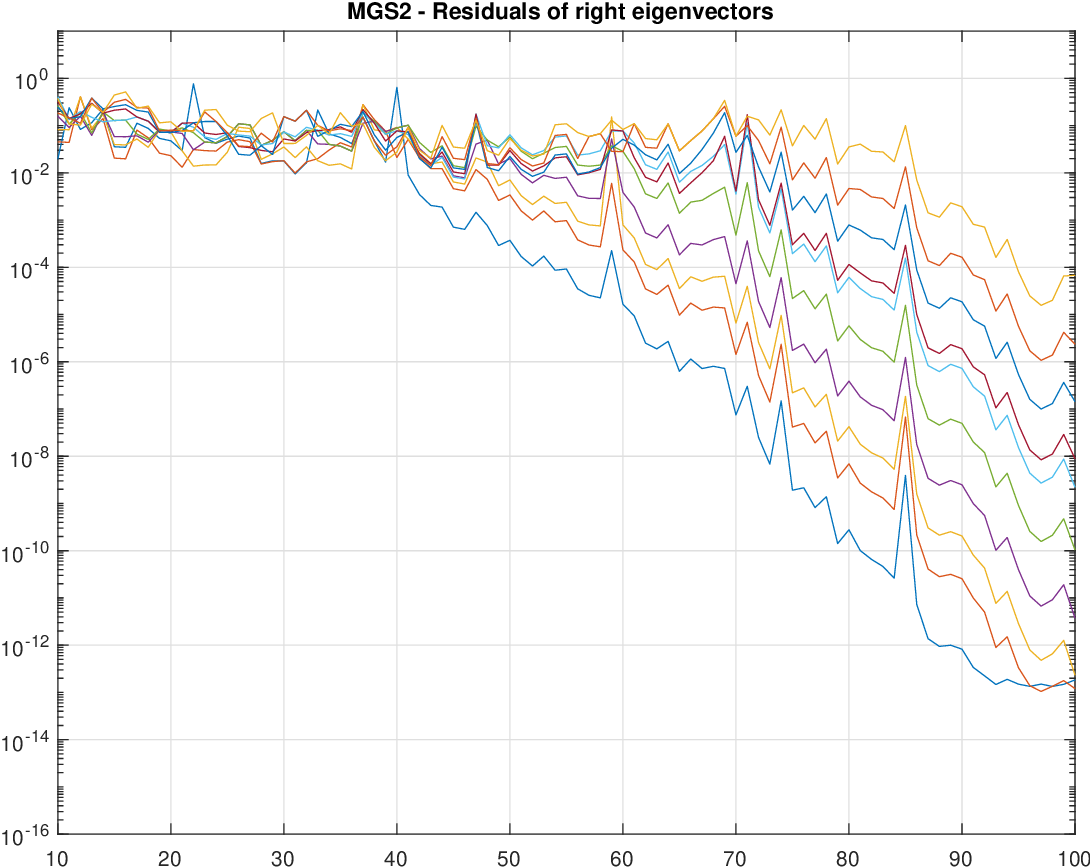}
    \caption{}
    \label{fig:sub_left}
\end{subfigure}
\hfill
\begin{subfigure}[b]{0.45\textwidth}
    \centering
    \includegraphics[width=\textwidth]{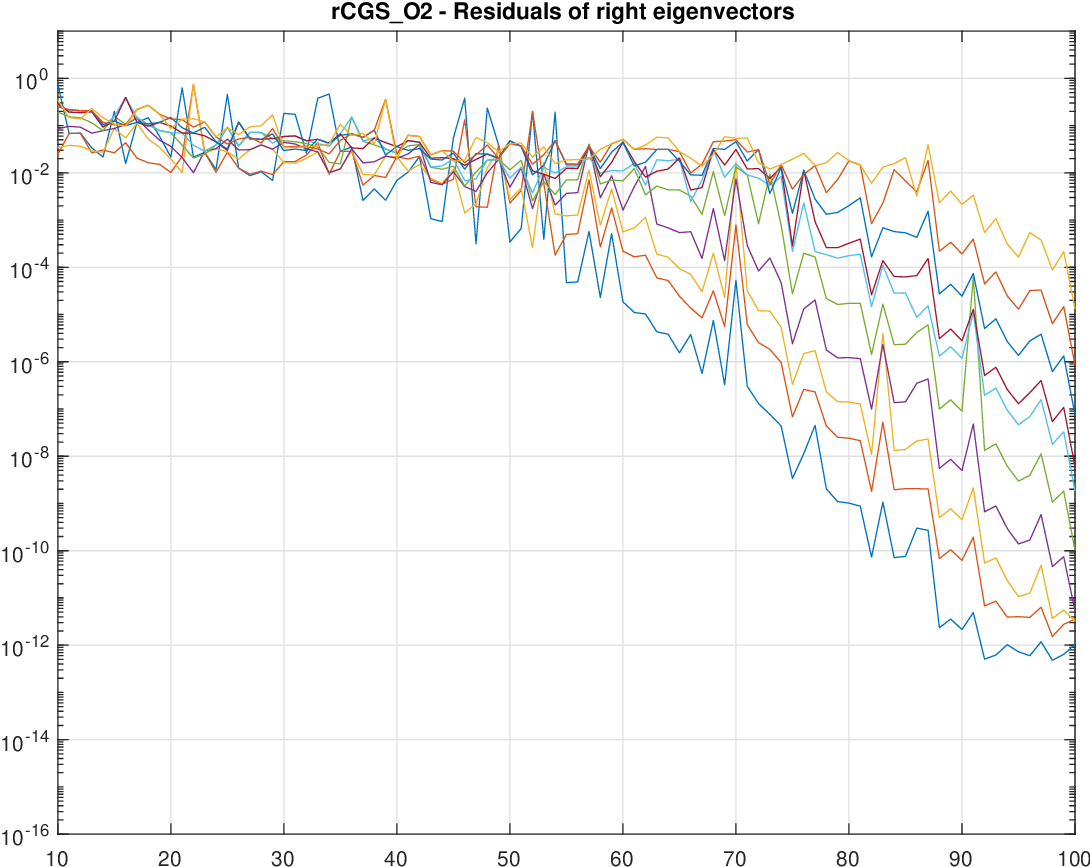}
    \caption{}
    \label{fig:sub_right}
\end{subfigure}
\caption{Convergence of the eigenvectors associated with the 10 largest eigenvalues of the matrix $\mat A$ defined in \cref{subsec:nonsymm_numerical_example}. (a) Nonsymmetric Lanczos with MGS2. (b) Randomized nonsymmetric Lanczos with rCGS\_O2.
\label{fig:non_symm_lanczos}}
\end{figure}

\section{Concluding remarks}
\label{sec:conclusions}

In this paper we have presented a randomized two-sided Gram-Schmidt algorithm, which constructs two bases that are sketch-biorthogonal instead of exactly biorthogonal. Since the length of the sketched vectors is significantly smaller than the length of the original vectors, the randomized algorithm has a significantly lower computation time compared to its deterministic counterpart. 
In addition to the increased computational efficiency, we have shown that the randomized two-sided Gram-Schmidt process has improved numerical stability and constructs better conditioned sketch-biorthogonal bases, especially in situations in which the biorthogonal bases constructed by the standard two-sided Gram-Schmidt process are severely ill-conditioned. This phenomenon is justified by the fact that the basis vectors constructed by the randomized algorithm are unlikely to be almost sketch-orthogonal, which helps in avoiding the ``worst-case scenario'' in which the condition number of the basis grows dramatically.
We have proposed and compared several different implementations of the sketched oblique projector in order to identify the best compromise between efficiency and numerical stability. According to our numerical experiments, the best overall implementation in terms of computational performance and stability is rCGS\_O2.
We have also proposed a nonsymmetric Lanczos algorithm which employs the randomized two-sided Gram-Schmidt algorithm to compute sketch-biorthogonal Krylov bases, and we have demonstrated its competitive performance against the standard nonsymmetric Lanczos algorithm for the computation of the leading left and right eigenvectors of a nonsymmetric matrix.

\section*{Acknowledgments}
We are grateful for insightful discussions with Yousef Saad and Valeria Simoncini. 
The first author has received funding from the European Research Council (ERC) under the European Union’s Horizon 2020 research and innovation program (grant agreement No 810367). The work of the second author is funded by the European Union under the National Recovery and
Resilience Plan (PNRR) - Mission 4 - Component 2 Investment 1.4 “Strengthening research structures and creation of “National R\&D Champions” on some Key Enabling Technologies” DD N. 3138 of 12/16/2021
rectified with DD N. 3175 of 18/12/2021, code CN00000013 - CUP J33C22001170001. 

\bibliographystyle{siam}
\bibliography{paper-biblio}

\newpage

\appendix

\section{A technical lemma}

The following technical lemma is needed in the proof of \cref{prop:sketched-inner-product--gaussian}. 
\begin{lemma}
	\label{lemma:k0-bound-near-zero}
	Let $K_0$ be the modified Bessel function of the second kind of order $0$. For all $u \in (0, 1)$, we have
	\begin{equation}
		\label{eqn:k0-bound-near-zero}
		K_0(u) \le -\ln u + K_0(1).
	\end{equation} 
\end{lemma}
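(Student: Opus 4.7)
The plan is to reduce the inequality to a monotonicity statement about a product involving $K_1$. Define $f(u) := K_0(u) + \ln u$. Then \cref{eqn:k0-bound-near-zero} is equivalent to $f(u) \le f(1)$ for all $u \in (0,1)$, which will follow if I can show that $f$ is nondecreasing on $(0,1]$.

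Using the standard identity $K_0'(u) = -K_1(u)$, I would compute
\begin{equation*}
	f'(u) = -K_1(u) + \frac{1}{u} = \frac{1 - u K_1(u)}{u},
\end{equation*}
so the problem reduces to establishing the pointwise bound $u K_1(u) \le 1$ for $u \in (0,1)$ (in fact for all $u > 0$).

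To prove this, I would analyze $g(u) := u K_1(u)$ directly. Using the classical recurrence $\frac{d}{du}[u^\nu K_\nu(u)] = -u^\nu K_{\nu-1}(u)$ with $\nu = 1$ gives
\begin{equation*}
	g'(u) = -u K_0(u) < 0 \qquad \text{for all } u > 0,
\end{equation*}
so $g$ is strictly decreasing on $(0,\infty)$. Combined with the well-known asymptotic $K_1(u) \sim 1/u$ as $u \to 0^+$ (from the series expansion of $K_1$), one obtains $\lim_{u \to 0^+} g(u) = 1$, and therefore $g(u) < 1$ for every $u > 0$. This yields $f'(u) > 0$ on $(0,\infty)$, hence $f(u) \le f(1) = K_0(1)$ on $(0,1]$, which is the desired inequality.

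The proof is essentially routine once the right monotone quantity is identified; the only mildly delicate point is correctly invoking the derivative identity for $u^\nu K_\nu(u)$ and confirming the limiting behavior of $u K_1(u)$ at the origin, both of which are standard facts about modified Bessel functions that can be cited from a reference such as \cite{AbramowitzStegun64}.
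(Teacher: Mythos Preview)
Your proof is correct and follows essentially the same route as the paper: both reduce the inequality to showing that $uK_1(u) < 1$ on $(0,1)$, establish this by computing $(uK_1(u))' = -uK_0(u) < 0$ and invoking $uK_1(u) \to 1$ as $u \to 0^+$, and conclude that $K_0(u) + \ln u$ is increasing. The only cosmetic difference is that you cite the derivative identity $\frac{d}{du}[u^\nu K_\nu(u)] = -u^\nu K_{\nu-1}(u)$ directly, whereas the paper derives $h'(u) = -uK_0(u)$ from the recurrence relations in \cite[eq.~(9.6.26)]{AbramowitzStegun64}.
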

\begin{proof}
	For completeness, we recall that the modified Bessel function of the second kind of order $0$ can be defined as (see \cite[eq.~(9.6.24)]{AbramowitzStegun64})
	\begin{equation*}
		\Kzero(u) := \int_{0}^{\infty}e^{-u\cosh t}\,dt, \qquad u > 0.
	\end{equation*} 
	However, this representation will not be needed for this proof.
	Define $g(u) := K_0(u) + \ln u$. If we show that $g'(u) > 0$ for all $u \in (0,1)$, we get $g(u) \le g(1) = K_0(1)$, which is equivalent to \cref{eqn:k0-bound-near-zero}. We have
	\begin{equation*}
		g'(u) = K_0'(u) + \frac{1}{u} = -K_1(u) + \frac{1}{u},
	\end{equation*}   
	where we used $K_0'(u) = K_1(u)$ (see \cite[eq.~(9.6.27)]{AbramowitzStegun64}). To prove that $g'(u) > 0$, it is therefore enough to show that $h(u) := u K_1(u) < 1$ for all $u \in (0,1)$. 
	Using \cite[eq.~(9.6.26)]{AbramowitzStegun64}, we have
	\begin{equation*}
		K_1'(u) = -\frac{1}{2}(K_0(u) + K_2(u)) \qquad \text{and} \qquad K_0(u) - K_2(u) = -\frac{2}{u} K_1(u),
	\end{equation*}
	so we obtain
	\begin{equation*}
		K_1'(u) = -K_0(u) - \frac{1}{u} K_1(u) \qquad \text{and} \qquad h'(u) = -u K_0(u).
	\end{equation*}
	Since $K_0(u) > 0$ for $u > 0$, we conclude that $h'(u) < 0$ for all $u \in (0,1)$, and therefore $h(u) \le \lim_{u \to 0^+} h(u) = 1$, which follows from the asympototic expansion $K_1(u) \approx \frac{1}{u}$ for $u \to 0^+$ (see \cite[eq.~(9.6.9)]{AbramowitzStegun64}). This shows in turn that $g'(u) > 0$ for all $u \in (0, 1)$, which concludes the proof.      
\end{proof}

\section{Mixed precision implementation}

In this section we investigate the behavior of \cref{algorithm:two-sided-gram-schmidt-sketched} when implemented using mixed precision. The advantages of using mixed precision in different applications have been extensively studied in modern literature, see e.g.~\cite{HighamMary22, YangFoxSanders21, YamazakiTomovDongarra15}. 
In particular, we are interested in the variants of the Gram-Schmidt process using multiprecision finite arithmetic \cite{YTKDB15, OktayCarson23, BalabanovGrigori22}. We choose to use {\tt single}, or {\tt fp32}, for low precision, and {\tt double}, or {\tt fp64}, for high precision. High precision is used for operations with sketched quantities, that is, application of the sketching operator to vectors, the updates of sketched vectors, and the biorthogonalization steps for the sketched bases. All other operations are performed in lower precision.

In \cref{tab:mix_comparison}, we compare the mixed precision implementations against the previous ones in \texttt{double} precision, on the ill-conditioned example presented in \cref{subsec:experiment-condition-number-growth}. We only report the results for the algorithms that converge both in \texttt{double} and in mixed precision. As expected, the algorithms using mixed precision are faster than their standard counterparts, especially rCGS3 and rCGS\_O2. Nonetheless, the loss of accuracy in the decompositions of $\mat X$ and $\mat Y$ is significantly higher than in \texttt{double} precision; note that the ratio between the errors in the mixed precision implementation and the one in \texttt{double} precision is approximately nine orders of magnitude, which is slightly higher than the ratio between the unit roundoffs between \texttt{single} and \texttt{double} precision. 
On an easy test problem, for example when $\mat X \approx \mat Y$, so that the two-sided Gram-Schmidt process is almost equivalent to the Gram-Schmidt orthogonalization of the matrix $\mat X$ and $\cond(\mat Q) \approx \cond(\mat P) \approx 1$, the mixed precision implementations achieve decomposition errors $\err(\mat X)$ and $\err(\mat Y)$ of order $10^{-8}$. Hence, we expect that the large decomposition errors in \cref{tab:mix_comparison} are mainly due to the high condition number of the computed bases $\mat Q$ and $\mat P$.

\begin{table}
	\caption{Comparison of the \texttt{double} precision and mixed precision implementations of randomized two-sided Gram-Schmidt, on the example in \cref{subsubsec:experiment-ill-conditioned}. The mixed precision algorithms have the prefix ``mp-'' in the table.
    \label{tab:mix_comparison}}
\centering
\begin{tabular}{l|cccccc}
\toprule
 & time (s) & $\cond(\mat Q)$ & $\cond(\mat P)$ & $\err(\mat X)$ & $\err(\mat Y)$ & sketch-biorth\\
\midrule
rMGS2 & 0.659 & 1.333e+05 & 5.699e+05 & 5.999e-12 & 3.980e-11 & 2.527e-11 \\

rCGS3 & 0.419 & 3.107e+05 & 9.504e+05 & 5.215e-12 & 3.308e-11 & 3.050e-11 \\

rCGS\_O2 & 0.320 & 1.639e+05 & 7.254e+05 & 4.943e-12 & 3.259e-11 & 9.432e-10 \\

\midrule 

mp-rMGS2 & 0.575 & 2.326e+06 & 8.905e+06 & 2.836e-03 & 1.943e-02 & 2.592e-11 \\

mp-rCGS3 & 0.192 & 2.325e+06 & 7.455e+06 & 2.223e-03 & 1.498e-02 & 2.741e-11 \\

mp-rCGS\_O2 & 0.185 & 2.326e+06 & 7.459e+06 & 2.224e-03 & 1.498e-02 & 2.804e-11 \\
\bottomrule
    \end{tabular}
\end{table}

\end{document}